\theoremstyle{plain}
\numberwithin{equation}{section}
\newtheorem{theorem}{Theorem}[section]
\newtheorem{lemma}[theorem]{Lemma}
\theoremstyle{remark}
\DeclareMathOperator{\Rset}{\mathbf{R}}
\DeclareMathOperator{\fint}{\int\mkern-17.8mu-\mkern-0.0mu-}
\def\XXint#1#2#3{{\setbox0=\hbox{$#1{#2#3}{\int}$ }
\vcenter{\hbox{$#2#3$ }}\kern-.6\wd0}}
\definecolor{brown}{rgb}{0.5,0,0}
\definecolor{backgroundcolor}{rgb}{0.98, 0.92, 0.73}
\def\cfac#1{\ifmmode\setbox7\hbox{$\accent"5E#1$}\else\setbox7\hbox{\accent"5E#1}\penalty 10000\relax\fi\raise 1\ht7\hbox{\lower1.05ex\hbox to 1\wd7{\hss\accent"13\hss}}\penalty 10000\hskip-1\wd7\penalty 10000\box7 }
\author[Q.A. Ng\^o]{Qu\cfac oc Anh Ng\^o}
\address[Q.A. Ng\^o]{Department of Mathematics\\
College of Science, Vi\^{e}t Nam National University\\
H\`{a} N\^{o}i, Vi\^{e}t Nam.}
\email{\href{mailto: Q.A. Ng\^o <nqanh@vnu.edu.vn>}{nqanh@vnu.edu.vn}}
\email{\href{mailto: Q.A. Ng\^o <bookworm\_vn@yahoo.com>}{bookworm\_vn@yahoo.com}}
\begin{document}  

\setpagewiselinenumbers
\setlength\linenumbersep{100pt}

\title[Classification of $(-\Delta)^N u + u^{-(4N-1)}= 0$ with linear growth at infinity in $\Rset^{2N-1}$]
{Classification of entire solutions of $(-\Delta)^N u + u^{-(4N-1)}= 0$ with exact linear growth at infinity in $\Rset^{2N-1}$}

\begin{abstract}
In this paper, we study global positive $C^{2N}$-solutions of the geometrically interesting equation $(-\Delta)^N u + u^{-(4N-1)}= 0$ in $\Rset^{2N-1}$. We prove that any $C^{2N}$-solution $u$ of the equation having linear growth at infinity must satisfy the integral equation
\[
u(x) = c_0 \int_{\Rset^{2N-1}} {|x - y|{u^{-(4N-1)}}(y)dy} 
\]
for some positive constant $c_0$ and hence takes the following form
\[
u(x) = (1+|x|^2)^{1/2}
\]
in $\Rset^{2N-1}$ up to dilations and translations. We also provide several non-existence results for positive $C^{2N}$-solutions of $(-\Delta)^N u = u^{-(4N-1)}$ in $\Rset^{2N-1}$.
\end{abstract}

\date{\bf \today \; at \, \currenttime}

\subjclass[2000]{35B45, 35J40, 35J60}

\keywords{$N$harmonic equation; negative exponent; $Q$-curvature; radially symmetry; linear growth at infinity}

\maketitle

\section{Introduction}

In this paper, we are interested in classification of entire solutions of the following geometric interesting equation 
\begin{equation}\label{eqMAIN}
(-\Delta)^N u + u^{-(4N-1)} = 0
\end{equation}
in $\Rset^{2N-1}$ with $N \geqslant 2$. In order to understand the significance of studying Eq. \eqref{eqMAIN} and the reason why we work on this equation, let us briefly exploit its root in conformal geometry. Loosely speaking, equations of the form \eqref{eqMAIN} come from the problem of prescribing $Q$-curvature on $\mathbb S^{2N-1}$, which is associated with the conformally covariant GJMS operator with the principle part $ \Delta_g^{2N-1}$, discovered by Graham--Jenne--Mason--Sparling \cite{GJMS}. This operator is a high-order elliptic operator analogue with the well-known conformal Laplacian in the problem of prescribing scalar curvature. 

Given a dimensional constant $n \geqslant 3$, let us consider the model $(\mathbb S^n, g_{\mathbb S^n})$ equipped with the standard metric $g_{\mathbb S^n}$. In this case, it is well-known that the GJMS operator of order $2N$ with $N \geqslant 2$ is given by
\begin{equation}\label{eqGJMS}
P_{2N, g_{\mathbb S^n}} (\cdot)= \prod\limits_{k =1}^N { \Big( \Delta_{g_{\mathbb S^n}} - \Big( \frac n2 - k \Big) \Big( \frac n2 + k - 1 \Big) \Big)} .
\end{equation}
The GJMS operator \eqref{eqGJMS} is conformally covariant is the sense that if we conformally change the standard metric $g_{\mathbb S^n}$ to a new metric $\widetilde g$ via $\widetilde g = v^{4/(n-2N)}g_{\mathbb S^n}$ for some smooth function $v$ on $\mathbb S^n$, then the two operators $P_{2N, \widetilde g}$ and $P_{2N, g_{\mathbb S^n}}$ are related via
\begin{equation}\label{eqConformallyCovariantOfGJMS}
P_{2N, \widetilde g} (\varphi) = v ^{-\frac{n+2N}{n-2N}} P_{2N, g_{\mathbb S^n}} (v \varphi)
\end{equation}
for any smooth, positive function $\varphi$ on $\mathbb S^n$. In \eqref{eqConformallyCovariantOfGJMS} if we set $\varphi \equiv 1$, then we obtain
\[
P_{2N, g_{\mathbb S^n}} (v) = P_{2N, \widetilde g} (1) v^\frac{n+2N}{n-2N}. 
\]
Thanks to \cite[Eq. (1.12)]{juhl}, we know that 
\[
P_{2N, \widetilde g} (1) = (-1)^N \Big( \frac n2 - N \Big) Q_{2N, \widetilde g}.
\]
for some scalar function $Q_{2N, \widetilde g}$ known that the $Q$-curvature associated with the GJMS operator $P_{2N, \widetilde g} $. From this we obtain the equation
\begin{equation}\label{eqQCurvature}
P_{2N, g_{\mathbb S^n}} (v) = (-1)^N \Big( \frac n2 - N \Big) Q_{2N, \widetilde g} \, v^\frac{n+2N}{n-2N} .
\end{equation}
Let us now limit ourselves to the case $n=2N-1$. Then up to a multiple of positive constants, Eq. \eqref{eqQCurvature} becomes
\begin{equation}\label{eqQCurvatureSpecial}
P_{2N, g_{\mathbb S^n}} (v) = (-1)^{N-1} Q_{2N, \widetilde g} \, v^\frac{n+2N}{n-2N} .
\end{equation}
Toward understanding the structure of the solution set of Eq. \eqref{eqQCurvatureSpecial}, let us only consider the case when $Q_{2N, \widetilde g}$ is constant. Upon a suitable scaling, we may assume $Q_{2N, \widetilde g} = \pm 1$.
Therefore, Eq. \eqref{eqQCurvatureSpecial} becomes
\begin{equation}\label{eqQCurvature(2N-1)}
P_{2N, g_{\mathbb S^n}} (v) = \pm (-1)^{N-1} v^{-(4N-1)} .
\end{equation}
Let us now denote by $\pi : \mathbb S^{2N-1} \to \Rset^{2N-1}$ the stereographic projection and set
\begin{equation}\label{eqUVViaProjection}
u(x)=v(\pi^{-1}(x))\Big( \frac{1+|x|^2}{2}\Big)^{1/2}
\end{equation}
for $x \in \Rset^{2N-1}$. Thanks to \cite[Proposition 1]{Gra07}, we can project \eqref{eqGJMS} with $n=2N-1$ from $\mathbb S^{2N-1}$ to $\Rset^{2N-1}$ to get
\begin{equation}\label{eqGJMSAfterProjection}
\Big( \frac{2}{1+|x|^2}\Big)^{-\frac{4N-1}{2}} (\Delta^N u)(x) = P_{2N, g_{\mathbb S^n}} \big( v(\pi^{-1}(x)) \big) .
\end{equation}
Therefore, via the stereographic projection $\pi$ and up to a multiplication of positive constant, combining Eq. \eqref{eqGJMSAfterProjection} and Eq. \eqref{eqQCurvature(2N-1)} gives
\[
\Delta^N u = \pm (-1)^{N-1} u^{-(4N-1)}.
\]
In the preceeding equation, if we consider the plus sign, the resulting equation leads us to Eq. \eqref{eqMAIN} while for the minus sign, we arrive at the equation
\begin{equation}\label{eqMAIN2}
(-\Delta)^N u = u^{-(4N-1)}
\end{equation}
in $\Rset^{2N-1}$.

As far as we know, several special cases of Eq. \eqref{eqMAIN} have already been studied in the literature. To be precise, when $N=2$, the following equation
\begin{equation}\label{eqMAIN-3D}
\Delta^2 u + u^{-7} = 0
\end{equation}
in $\Rset^3$ was studied by Choi and Xu in \cite{ChoiXu} as well as by McKenna and Reichel in \cite{KR}. The main result in \cite{ChoiXu} is that if $u$ solves \eqref{eqMAIN-3D} with exact linear growth at infinity in the sense that $\lim_{|x| \to +\infty} u(x)/|x|$ exists then $u$ solves the following integral equation
\[
u(x) = \int_{\Rset^3} |x-y| u(y)^{-7}dy .
\]
From this integral representation, by a beautiful classification of positive solutions of integral equations by Li \cite{li2004} and Xu \cite{xu2005}, it is widely known that $u(x) = (1+|x|^2)^{1/2}$ up to dilations and translations. When $N=3$, Eq. \eqref{eqMAIN} leads us to the equation
\begin{equation}\label{eqMAIN-5D}
\Delta^3 u = u^{-11} 
\end{equation}
in $\Rset^5$. Its associated integral equation becomes
\[
u(x) = \int_{\Rset^5} |x-y| u(y)^{-11}dy .
\]
This integral equation was studied by Feng and Xu in \cite{fx2013}. The main result in \cite{fx2013} tell us that the only entire positive solution of Eq. \eqref{eqMAIN-5D} is $u(x) = (1+|x|^2)^{1/2}$ up to dilations and translations. As a counter-part of Eq. \eqref{eqMAIN-5D}, the following triharmonic Lane--Emden equation
\[
\Delta^3 u + |u|^{p-1}u = 0 
\]
in $\Rset^n$ with $p>1$ was recently studied by Luo, Wei, and Zou \cite{LWZ16}; see also \cite{GuoWei}. We take this chance to remind of a work by Ma and Wei in \cite{MaWei} where the authors studied the equation
\[
\Delta u = u^\tau
\]
with $\tau<0$. Clearly, this equation has a similar form of that of Eq. \eqref{eqMAIN} with $N=1$.

In the present paper, following the main question posted in \cite{ChoiXu, G}, we initiate our study on the structure of solution set of \eqref{eqMAIN} and \eqref{eqMAIN2}. To be precise, for Eq. \eqref{eqMAIN}, we are able to classify all solutions with exact linear growth at infinity. The following theorem is the content of this result.

\begin{theorem}\label{thmCLASSIFICATION}
All solutions of partial differential equation \eqref{eqMAIN} which satisfies
\begin{equation}\label{eqLinearGrowth}
\mathop {\lim }\limits_{|x| \to +\infty } \frac{{u(x)}}{{|x|}} = \alpha \quad \text{ uniformly }
\end{equation}
for some  \textbf{non-negative} finite constant $\alpha$ verify the following integral equation
\[
u(x) = c_0 \int_{\Rset^{2N-1}} {|x - y|{u^{-(4N-1)}}(y)dy} .
\]
Consequently, up to dilations and translations, the only entire solutions of \eqref{eqMAIN} satisfying \eqref{eqLinearGrowth} is
\[
u(x) = (1+|x|^2)^{1/2}
\]
in $\Rset^{2N-1}$.
\end{theorem}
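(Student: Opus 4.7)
The plan is first to derive the integral equation and then invoke the classification of its positive solutions, proved in varying degrees of generality by Li \cite{li2004} and Xu \cite{xu2005} and extended to the higher-order setting by Feng--Xu \cite{fx2013}. The central observation is that in dimension $n=2N-1$ the function $|x|$ is, up to a negative constant $\kappa_N$, a fundamental solution of $(-\Delta)^N$: an $(N-1)$-fold iteration of $\Delta|x|^{\gamma}=\gamma(\gamma+n-2)|x|^{\gamma-2}$ reduces $\Delta^{N-1}|x|$ to a constant multiple of $|x|^{3-2N}=|x|^{2-n}$, the Newtonian kernel on $\Rset^{2N-1}$, and one further Laplacian produces a negative multiple of $\delta_0$. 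Choose $c_0>0$ so that
\[
w(x):=c_0\int_{\Rset^{2N-1}}|x-y|\,u^{-(4N-1)}(y)\,dy
\]
satisfies $(-\Delta)^N w=-u^{-(4N-1)}$. (The degenerate sub-case $\alpha=0$, in which the forthcoming integrability breaks down, should be ruled out separately as vacuous.)

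Under \eqref{eqLinearGrowth} with $\alpha>0$ one has $u^{-(4N-1)}(y)\leq C(1+|y|)^{-(4N-1)}$ at infinity, and since $4N-1>2N-1$, both $\int u^{-(4N-1)}dy$ and $\int|y|u^{-(4N-1)}(y)dy$ are finite, so $w$ is a well-defined classical function. The difference $h:=u-w$ is therefore polyharmonic of order $N$ on $\Rset^{2N-1}$. Applying dominated convergence (with the integrable majorant $(1+|y|)u^{-(4N-1)}(y)$) to the elementary expansion $|x-y|/|x|=1+O(|y|/|x|)$ gives
\[
\frac{w(x)}{|x|}\longrightarrow c_0\int_{\Rset^{2N-1}}u^{-(4N-1)}(y)\,dy\qquad\text{as }|x|\to\infty,
\]
uniformly in $x/|x|$; combined with \eqref{eqLinearGrowth}, $h(x)/|x|\to\beta:=\alpha-c_0\int u^{-(4N-1)}$ uniformly in direction.

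Since $h$ is polyharmonic of order $N$ with at most linear growth, the Almansi representation together with the Liouville theorem for harmonic functions of linear growth forces $h$ to be a polynomial of degree at most one, $h(x)=a+b\cdot x$. The uniform, direction-independent convergence of $h(x)/|x|$ to $\beta$ then forces $b=0$ and $\beta=0$, yielding as a byproduct the mass identity $\alpha=c_0\int u^{-(4N-1)}$. The main obstacle is to remove the leftover constant $a$, since the PDE is insensitive to adding a constant to $u-w$. My plan is to refine the asymptotics of $w$ one more order: expanding $|x-y|=|x|-\hat x\cdot y+O(|y|^2/|x|)$ and using $\int|y|u^{-(4N-1)}<\infty$ gives
\[
w(x)=\alpha|x|-c_0\hat x\cdot M+o(1),\qquad M:=\int y\,u^{-(4N-1)}(y)\,dy,
\]
which must be matched against a comparable expansion for $u$ obtained by iteratively representing $(-\Delta)^{N-1}u,(-\Delta)^{N-2}u,\ldots,(-\Delta)u$ as convolutions against the fundamental solutions of the lower-order polyharmonic operators and exploiting their decay at infinity at rates $|x|^{1-2k}$ dictated by $\Delta^{k}|x|$. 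A complementary route is to test $(-\Delta)^N u+u^{-(4N-1)}=0$ against $x\cdot\nabla u$ on $B_R$ and pass to the limit $R\to\infty$, producing a Pohozaev-type identity that should pin $a$ to zero. Once $h\equiv 0$, the integral representation $u(x)=c_0\int|x-y|u^{-(4N-1)}(y)\,dy$ holds, and the classification via the method of moving spheres from \cite{li2004,xu2005,fx2013} yields $u(x)=(1+|x|^2)^{1/2}$ up to dilations and translations.
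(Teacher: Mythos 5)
Your main structural idea --- writing $w=c_0\int|x-y|u^{-(4N-1)}(y)\,dy$, observing that $h=u-w$ is $N$-polyharmonic with two-sided linear growth bounds (since $0<u\leqslant C(1+|x|)$ and $0\leqslant w\leqslant C(1+|x|)$), and invoking a single Liouville theorem for polyharmonic functions to conclude $h$ is affine --- is genuinely different from the paper, which never uses a polyharmonic Liouville theorem. The paper instead descends one Laplacian at a time: it first proves the sub poly-harmonic property $(-\Delta)^k u<0$ for $k=1,\dots,N-1$ (Lemma \ref{lem-PolySubHarmonic}), then at each stage shows $\Delta((-\Delta)^{N-k}u-U_{N-k})=0$ with $(-\Delta)^{N-k}u$ bounded above and $U_{N-k}$ bounded, applies the \emph{harmonic} Liouville theorem, and kills the integration constant using the decay of $(-\Delta)^k\overline u$ and of $U_k$ at infinity (Lemmas \ref{lem-Limits} and \ref{lem-LimitDelta^iU}). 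Your route, if the polyharmonic Liouville theorem with a two-sided growth bound is properly cited or proved (the Almansi decomposition alone does not immediately transfer the growth bound to each harmonic component; interior derivative estimates for $\Delta^N$ are the cleaner justification), would bypass all of that machinery for the representation step. That is a real simplification worth noting.

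However, two steps that the paper treats as essential are left as gaps. First, you dismiss the case $\alpha=0$ as "vacuous, to be ruled out separately," but the theorem is stated for non-negative $\alpha$, and ruling out $\alpha=0$ is not free: the paper needs Lemmas \ref{lem-PolySubHarmonic}, \ref{lem-BarrierForPolySubHarmonic}, \ref{lem-SecondDerivativeIsPositive} and \ref{lem-AlphaIsPositive} (via $\overline u''\geqslant 0$, hence $\overline u'(r)\geqslant\overline u'(1)>0$) to show any solution grows at least linearly. Without this, your convolution $w$ need not even converge, so the argument cannot start. Second, the elimination of the residual constant $a$ is the crux and is not actually executed. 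Your first proposed route (matching second-order asymptotics of $w$ against an expansion of $u$ "obtained by iteratively representing $(-\Delta)^k u$ as convolutions") is circular: the only source of such an expansion for $u$ is the identity $u=w+a$ itself, and both sides then carry the same undetermined $a$. Your second route is the right idea but only asserted ("should pin $a$ to zero"); moreover, testing the PDE directly against $x\cdot\nabla u$ on $B_R$ produces boundary terms involving derivatives of $u$ up to order $2N-1$ whose decay you have not controlled. The paper's Lemma \ref{lem-GammaIsZero} instead applies the Pohozaev-type computation to the \emph{integral representation}: it multiplies $x\cdot\nabla u$, computed from \eqref{eqNablaU}, by $u^{-(4N-1)}$, integrates over $B(0,R)$, uses the identity $|x|^2-x\cdot y=\tfrac12\big(|x-y|^2+|x|^2-|y|^2\big)$ together with the antisymmetry of the last kernel, and lets $R\to+\infty$; only the integrability of $(1+|y|)^2u^{-(4N-1)}$ is needed, and the conclusion is $\gamma\int u^{-(4N-1)}=0$. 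You should carry out this computation (or an equivalent one) explicitly to close the argument.
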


As already discussed in \cite{ChoiXu}, a major reason for imposing assumption \eqref{eqLinearGrowth} in studying \eqref{eqMAIN} follows from the fact that entire solutions of \eqref{eqMAIN} with exact linear growth at infinity correspond to complete conformal metrics on $\mathbb S^{2N-1}$, thanks to \eqref{eqUVViaProjection}. We expect that Eq. \eqref{eqMAIN}, if freezing from geometric interpretation, also admits entire solutions with different growth at infinity. This is supported by considering Eq. \eqref{eqMAIN} when $N=2$; see \cite{G, DuocNgo2015}.

For Eq. \eqref{eqMAIN2}, we prove that in fact this equation does not admit solutions with exact linear growth at infinity.

\begin{theorem}\label{thmNONEXISTENCE}
There is no positive $C^{2N}$-solution to Eq. \eqref{eqMAIN2} which satisfies
\[
\lim\limits_{|x| \to +\infty } \frac{{u(x)}}{{|x|}} = \alpha \quad \text{ uniformly }
\]
for some \textbf{positive} finite constant $\alpha$.
\end{theorem}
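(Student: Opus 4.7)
The plan is to adapt the integral-representation argument underlying Theorem \ref{thmCLASSIFICATION} to the opposite sign convention of Eq. \eqref{eqMAIN2}, and then to derive a contradiction from the positivity of $u$ together with the uniformity of the prescribed limit. A preliminary remark: the hypothesis $u(x)/|x|\to\alpha>0$ uniformly implies $u(x)\geq(\alpha/2)|x|$ for $|x|$ sufficiently large, which in turn yields $u^{-(4N-1)}(y)\leq C(1+|y|)^{-(4N-1)}$. Since $4N-1>2N-1=n$, both $u^{-(4N-1)}$ and its first moment $|y|u^{-(4N-1)}(y)$ are integrable on $\Rset^{2N-1}$.

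The heart of the proof is the analogue of the integral representation of Theorem \ref{thmCLASSIFICATION}. Exploiting the fact that in the odd dimension $n=2N-1$ the distribution $|x|$ is, up to a non-zero multiplicative constant of definite sign, the fundamental solution of $(-\Delta)^N$, I would repeat the proof of Theorem \ref{thmCLASSIFICATION} step by step with the sign of the right-hand side reversed. This should produce
\[
u(x) = P(x) - c_0 \int_{\Rset^{2N-1}} |x-y|\, u^{-(4N-1)}(y)\, dy
\]
for the same positive constant $c_0$ as in Theorem \ref{thmCLASSIFICATION} and some entire polyharmonic function $P$. The minus sign in front of the integral is forced by the sign of the nonlinearity in \eqref{eqMAIN2}, and is the crucial distinction from Theorem \ref{thmCLASSIFICATION}. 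Combined with the linear growth of both $u$ and $\int |x-y|u^{-(4N-1)}(y)dy$, one obtains $|P(x)|=O(|x|)$, and the standard Liouville-type theorem for polyharmonic functions of linear growth then forces $P$ to be affine, $P(x)=a+b\cdot x$ for some $a\in\Rset$ and $b\in\Rset^{2N-1}$.

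To conclude, I would compute the precise asymptotics of both sides along a ray. Writing $|te-y|=t-e\cdot y+O(|y|^2/t)$ for a unit vector $e$ as $t\to+\infty$, and using dominated convergence together with the integrability of the first moment, one has
\[
\lim_{t\to+\infty}\frac{1}{t}\int_{\Rset^{2N-1}}|te-y|\,u^{-(4N-1)}(y)\,dy=\beta,
\]
where $\beta:=\int_{\Rset^{2N-1}}u^{-(4N-1)}(y)dy>0$, uniformly in the unit vector $e$. Combining with the representation above yields $u(te)/t\to b\cdot e-c_0\beta$, and the uniformity of the hypothetical limit $u(x)/|x|\to\alpha$ forces $b\cdot e$ to be independent of $e$, hence $b=0$. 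One is then left with $\alpha=-c_0\beta<0$, which contradicts $\alpha>0$.

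The main obstacle is the integral-representation step: all of the analytic substance sits in verifying that the proof producing the representation for \eqref{eqMAIN} (typically based on estimates for spherical means of $(-\Delta)^k u$ for $0\leq k\leq N-1$ and a Liouville classification of the polyharmonic remainder) continues to work with a positive source term in place of a negative one, since several monotonicity and sign statements get reversed. Once that representation is secured, the sign-based contradiction above is essentially immediate.
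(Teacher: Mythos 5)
Your endgame is clean, but the proposal has a genuine gap exactly where you place ``the main obstacle'': the integral representation $u(x)=P(x)-c_0\int_{\Rset^{2N-1}}|x-y|\,u^{-(4N-1)}(y)\,dy$ for solutions of \eqref{eqMAIN2} is asserted, not derived, and it is not a routine sign flip of Section 3. The derivation for \eqref{eqMAIN} rests on the sub poly-harmonic property (Lemma \ref{lem-PolySubHarmonic}), the decay of the spherical means $(-\Delta)^k\overline u$ (Lemma \ref{lem-Limits}), and repeated one-sided Liouville arguments, each of which must be re-examined with the sign of the source reversed; you flag this but do not do it. More to the point, if you do re-run that machinery you never reach the representation: the sign-reversed super poly-harmonic property for \eqref{eqMAIN2} gives $(-\Delta)^k u>0$ for $k=1,\dots,N-1$, in particular $-\Delta u>0$, so $\overline u$ is the spherical mean of a positive superharmonic function and satisfies $\overline u'(r)\leqslant 0$, hence $\overline u(r)\leqslant\overline u(0)$ --- already incompatible with the bound $\overline u(r)\geqslant(\alpha/2)r$ that you yourself derive from the uniform limit. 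So the contradiction arrives at the very first lemma, long before any representation is obtained; your outline, which postpones the contradiction to the final asymptotic computation along rays, cannot be executed in the order written, and it never locates where the deferred step actually breaks.

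This short-circuit is precisely the paper's proof: $(-\Delta)^N u=u^{-(4N-1)}>0$ forces, by the analogue of Lemma \ref{lem-PolySubHarmonic} (i.e.\ \cite[Theorem 2]{ngo2016}, whose proof only needs \eqref{eqEstimateForUbar} and the linear growth), that $u$ is superharmonic; hence $\overline u$ is non-increasing and bounded, contradicting linear growth with $\alpha>0$. No integral representation, no Liouville classification of the polyharmonic remainder, and no moment estimates are needed. Your preliminary integrability remarks and the computation $u(te)/t\to b\cdot e-c_0\beta$ (forcing $b=0$ and $\alpha=-c_0\beta<0$) are correct as far as they go and would be a legitimate finish \emph{if} the representation were available, but all of the content of the theorem sits in the step you defer, and that step is both more delicate than ``reverse the signs'' and, as it turns out, unnecessary.
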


We note that a similar non-existence result for solutions of Eq. \eqref{eqMAIN2} was obtained by Xu and Yang in \cite[Lemma 4.3]{XuYang2002}. To be exact, it was proved in \cite{XuYang2002} that there is no $C^4$-solution $u$ of \eqref{eqMAIN2} with $N=2$ in $\Rset^3$ which is bounded from below away from zero with the following conditions: $\int_{\Rset^3} u^{-6} dx <+\infty$, $\int_{\Rset^3} (\Delta u)^2 dx <+\infty$. In the following result, we generalize this result for solutions of \eqref{eqMAIN2}.

\begin{theorem}\label{thmNewNONEXISTENCE}
There is no positive $C^{2N}$-solution $u$ to Eq. \eqref{eqMAIN2} which satisfies
\begin{itemize}
\item [(1)] $\int_{\Rset^{2N-1}} u^{-(4N-2)} dx < \infty$, 
\item [(2)]$u \geqslant 1$ and $u(0)=1$, and
\item [(3)] $(-\Delta)^i u \in L^2(\Rset^{2N-1})$ for all $i=1,2,\dots, N-1$.
\end{itemize}
\end{theorem}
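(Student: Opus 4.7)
My plan is to argue by contradiction via a spherical-averaging scheme. Set $u_k := (-\Delta)^k u$ for $k=0,1,\dots,N$, so that $u_0=u$, $u_N = u^{-(4N-1)}$, and $-\Delta u_k = u_{k+1}$. Denote the spherical average on $\partial B_r(0)$ by $\bar u_k(r)$. With $n:=2N-1$, differentiating under the integral and applying the divergence theorem on $B_r$ yield the first-order identity
\[
r^{n-1}\bar u_k'(r) \;=\; -\int_0^r s^{n-1}\bar u_{k+1}(s)\,ds,\qquad k=0,1,\dots,N-1,\ r>0.
\]
Two immediate consequences follow: (a) the sign of $\bar u_k'(r)$ is opposite to that of the running mean of $\bar u_{k+1}$ on $[0,r]$, and (b) by Jensen's inequality, $u_k\in L^2(\Rset^{n})$ translates into $\bar u_k\in L^2((0,\infty),r^{n-1}dr)$.

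The heart of the proof is a downward induction on $k$ from $N-1$ to $1$, whose conclusion is that each such $\bar u_k$ is \emph{non-negative}, non-increasing, and tends to zero as $r\to\infty$. The base case $k=N-1$ uses that $\bar u_N(r) = \fint_{\partial B_r} u^{-(4N-1)}\,dS>0$: the identity forces $\bar u_{N-1}'\leq 0$, so $\bar u_{N-1}$ is non-increasing and admits a limit $L\in [-\infty,\bar u_{N-1}(0)]$; hypothesis (3) together with observation (b) places $\bar u_{N-1}$ in $L^2((0,\infty),r^{n-1}dr)$, and since $n=2N-1\geq 3$, any nonzero value of $L$ would produce an infinite tail integral, forcing $L=0$ and hence $\bar u_{N-1}\geq 0$ throughout. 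The inductive step is identical: $\bar u_{k+1}\geq 0$ combined with the displayed identity yields $\bar u_k'\leq 0$, and (3) together with monotonicity forces $\bar u_k(r)\to 0$, whence $\bar u_k\geq 0$.

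Finally, I specialize the non-negativity at $k=1$ to obtain $\bar u_0'\leq 0$ via the same identity (for $k=0$ one cannot invoke (3) since $u\notin L^2$, so this last rung is handled separately). Thus $\bar u_0$ is non-increasing on $[0,\infty)$, and hypothesis (2) gives $\bar u_0(0)=u(0)=1$, so $\bar u_0(r)\leq 1$ for every $r\geq 0$; on the other hand $u\geq 1$ forces $\bar u_0(r)\geq 1$, whence $\bar u_0\equiv 1$. Combined with $u\geq 1$ and the continuity of $u$, this yields $u\equiv 1$ on $\Rset^{n}$, contradicting the equation $(-\Delta)^N u = u^{-(4N-1)}\equiv 1$ since $(-\Delta)^N 1\equiv 0$. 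The main obstacle I foresee is the soft real-variables step ``non-increasing $+$ $L^2(r^{n-1}dr)\Rightarrow$ tends to $0$'', which must be invoked at every induction level $k=1,\dots,N-1$ and must be paired with careful sign-tracking to propagate the monotonicity cascade; the derivation of the first-order identity for $\bar u_k'$ and the initial reduction to spherical averages are routine but need to be laid out cleanly with the weight $r^{n-1}$.
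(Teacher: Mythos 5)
Your proposal is correct, and it reaches the conclusion by a genuinely different and more elementary route than the paper. The paper first recovers the super poly-harmonic property in a strong, pointwise form: it introduces the Newtonian-potential representations $U_k$ of $(-\Delta)^k u$, shows via a triple radial integration identity that each $(-\Delta)^k u$ is bounded from above, invokes the Liouville theorem for harmonic functions to conclude $(-\Delta)^k u = U_k \geqslant 0$ everywhere, and then finishes by observing that $\Delta(1/u) \geqslant 0$ with $1/u$ bounded and attaining its maximum at the origin, so that $u$ is constant, contradicting hypothesis (1). You instead work entirely with origin-centered spherical averages: the first-order identity $r^{n-1}\bar u_k'(r) = -\int_0^r s^{n-1}\bar u_{k+1}(s)\,ds$, the Jensen/Cauchy--Schwarz transfer of hypothesis (3) to the weighted space $L^2(r^{n-1}dr)$, and the soft lemma ``monotone plus weighted-$L^2$ implies decay to zero'' give the cascade $\bar u_{N-1}\geqslant 0 \Rightarrow \cdots \Rightarrow \bar u_1 \geqslant 0 \Rightarrow \bar u_0' \leqslant 0$, after which hypothesis (2) forces $\bar u_0 \equiv 1$, hence $u\equiv 1$, which is incompatible with the equation. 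Your argument avoids Green functions, the Liouville theorem, and the maximum principle altogether, obtains the sign of $(-\Delta)^k u$ only in the averaged (weaker but sufficient) sense, and — notably — never uses hypothesis (1), so it in fact establishes a slightly stronger statement; the trade-off is that the paper's route produces explicit integral representations and pointwise sign information that are reusable elsewhere. All the steps you flag as delicate (the derivation of the first-order identity for $C^{2N}$ solutions, and the decay lemma applied at each level $k=1,\dots,N-1$) go through without difficulty.
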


As in \cite{XuYang2002}, the main ingredient in the proof of Theorem \ref{thmNewNONEXISTENCE} are mean value properties for biharmonic functions and the Liouville theorem. Note that in the proof of Theorem \ref{thmNONEXISTENCE}, we exploit the super poly-harmonic property for solutions of Eq. \eqref{eqMAIN2} under the linear growth assumption. In the proof of Theorem \ref{thmNewNONEXISTENCE} we also exploit  the super poly-harmonic property for solutions of Eq. \eqref{eqMAIN2} without using the linear growth property.

In the next section, several fundamental estimates for solutions of Eq. \eqref{eqMAIN} are provided. These estimates are useful for obtaining integral representation for all $(-\Delta)^k u$ for $k$ from $N-1$ down to $0$. Once we have an integral representation for $u$, we are able to classify solutions. In the last part of the paper, we prove Theorems \ref{thmCLASSIFICATION}, \ref{thmNONEXISTENCE}, and \ref{thmNewNONEXISTENCE}.

\section{Elementary estimates}

In this section, we setup some notations and provide elementaty estimates necessary to deal with elliptic equations with poly-harmonic operators. We note that although our approach is similar to the one used in \cite{ChoiXu}, in several places, we have to introduce new ideas to deal with high-order elliptic equations. 

We will denote the sphere in $\Rset^{2N-1}$ of radius $r$ and center $x_0$ by $\partial B(x_0, r)$ and its included solid ball in $\Rset^{2N-1}$ by $B(x_0, r)$. We introduce the average of a function $f$ on $\partial B(x_0, r)$ by
\[
\overline f (x_0, r) = \frac{1}{\omega_{2N-1} r^{2N-2}}\int_{\partial B(x_0, r)} {f(x) d\sigma_x  } =\fint_{\partial B(x_0, r)} f(x) d\sigma_x
\]
which depends only on the radius $r$. Here by $\omega_{2N-1}$ we mean the volume of the unit sphere $\partial B(x_0, 1)$ centered at $x_0$ sitting in $\Rset^{2N-1}$. (Note that $\omega_n = 2\pi^{n/2}/\Gamma (n/2)$ for all $n$.) Throughout the paper, if $x_0 =O$, then we drop $O$ in the notation $\overline f(O, r)$ for simplicity.

We also denote various dimensional constants
\begin{equation}\label{eqConstantsC}
\left\{
\begin{split}
c_{N-1} & = \omega_{2N-1}^{-1}, \\
c_{N-k-1} &= \frac { c_{N-k} }{ 2k(2N-2k-3) } \quad \text{ for } 1 \leqslant k \leqslant N-2.
\end{split}
\right.
\end{equation}
Clearly $c_k>0$ for all $1 \leqslant k \leqslant N-2$. We also let $c_0>0$ be
\begin{equation}\label{eqConstantsC0}
c_0 = \frac{c_1}{2N-2}.
\end{equation}
Keep in mind that $-c_{N-1}|x-y|^{-(2N-3)}$ is the Green function of the operator $\Delta$ in $\Rset^{2N-1}$.

We list here the following useful inequality whose proof is exactly the same as \cite[Lemma 2.1]{ChoiXu} in $\Rset^3$.

\begin{lemma}\label{lem-ApplicationJensenInequality}
For any point $x_0$ in $\Rset^{2N-1}$ and any $q, r>0$, there holds
\[
\Big( {\fint_{ \partial B(x_0, r) } {fd\sigma } } \Big)^{ - q} \leqslant \fint_{ \partial B(x_0, r) } {{f^{ - q}}d\sigma } .
\]
\end{lemma}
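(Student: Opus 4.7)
The plan is to recognize this as a direct application of Jensen's inequality to the convex function $\varphi(t) = t^{-q}$ on $(0,\infty)$, which is convex for every $q>0$ since $\varphi''(t) = q(q+1)t^{-q-2} > 0$. Because $f$ is implicitly positive (it plays the role of a positive solution $u$ or a positive power thereof in the applications), one may form the probability measure $d\mu = d\sigma / (\omega_{2N-1} r^{2N-2})$ on $\partial B(x_0, r)$, and Jensen's inequality in the form
\[
\varphi\Big( \int f \, d\mu \Big) \leqslant \int \varphi(f) \, d\mu
\]
gives the claim immediately upon inserting $\varphi(t) = t^{-q}$.

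To make the proof completely self-contained, I would first observe that the surface area of $\partial B(x_0, r)$ equals $\omega_{2N-1} r^{2N-2}$, so that the averaging operator $\fint_{\partial B(x_0, r)} \cdot \, d\sigma$ integrates against a genuine probability measure. Then I would invoke Jensen's inequality in the standard form for a convex function applied to a probability-integrated argument. Alternatively, if one wishes to avoid citing Jensen directly, one can derive the inequality from the tangent-line support characterization of convexity: at the point $m = \fint_{\partial B(x_0, r)} f\, d\sigma$, there exists an affine function $L(t) = \varphi(m) + \varphi'(m)(t-m)$ with $\varphi(t) \geqslant L(t)$ for all $t>0$, and averaging this pointwise inequality over $\partial B(x_0, r)$ eliminates the linear term and yields $\fint \varphi(f)\, d\sigma \geqslant \varphi(m)$, which is exactly the stated inequality.

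There is no real obstacle here: the only subtle point to flag is the positivity of $f$ (otherwise $f^{-q}$ is not defined), which is implicit from the context of the paper since the lemma will be applied with $f = u$ where $u$ is a positive solution of \eqref{eqMAIN}. Thus the proof reduces to one line once Jensen's inequality is cited, and mirrors \cite[Lemma 2.1]{ChoiXu} verbatim with the only difference being the dimension of the ambient Euclidean space.
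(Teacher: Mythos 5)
Your proof is correct and is exactly the argument the paper intends: the paper simply defers to \cite[Lemma 2.1]{ChoiXu}, whose proof is Jensen's inequality for the convex function $t\mapsto t^{-q}$ on $(0,\infty)$ against the normalized surface measure, as the lemma's very label (``ApplicationJensenInequality'') indicates. Your remark on the required positivity of $f$ is also the right caveat, and the tangent-line alternative is a fine self-contained substitute.
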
 

Using Lemma \ref{lem-ApplicationJensenInequality}, we obtain from \eqref{eqMAIN} the following differential inequality
\begin{equation}\label{eqMAINafterAverage}
(-\Delta)^N \overline u + \overline u^{-(4N-1)} \leqslant 0.
\end{equation}
In particular, there holds $(-\Delta)^N \overline u < 0$ everywhere in $\Rset^{2N-1}$. The next lemma, which is known as the sub poly-harmonic property of $u$, is of crucial importance as it allows to deal with high order equations.

\begin{lemma}\label{lem-PolySubHarmonic}
All positive solutions $u$ of \eqref{eqMAIN} with the growth \eqref{eqLinearGrowth} satisfy
\[(-\Delta)^k u < 0\]
everywhere in $\Rset^{2N-1}$ for each $k=1,...,N-1$.
\end{lemma}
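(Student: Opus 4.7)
The plan is to work with the spherical average $\psi(r) := \overline{u}(x_0, r)$ around a fixed but arbitrary point $x_0 \in \Rset^{2N-1}$ and to proceed by descending induction on $k$. Since the radial Laplacian $\Delta_r f = \frac{1}{r^{2N-2}}(r^{2N-2} f'(r))'$ commutes with spherical averaging, the function $\phi_k(r) := (-\Delta)^k \psi(r)$ coincides with $\overline{(-\Delta)^k u}(x_0, r)$; in particular $\phi_k(0) = (-\Delta)^k u(x_0)$, so the asserted pointwise inequality reduces to showing $\phi_k(r) < 0$ for every $r \geq 0$ and every $k \in \{1, \dots, N-1\}$. The top case is already in hand: by \eqref{eqMAINafterAverage} together with Lemma \ref{lem-ApplicationJensenInequality} one has $\phi_N(r) \leq -\psi(r)^{-(4N-1)} < 0$ for all $r \geq 0$.

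For the inductive step, I assume $\phi_{k+1}(r) < 0$ on $[0, \infty)$. Rewriting $-\Delta \phi_k = \phi_{k+1}$ radially gives
\begin{equation*}
\bigl(r^{2N-2} \phi_k'(r)\bigr)' \;=\; -\, r^{2N-2}\, \phi_{k+1}(r) \;>\; 0, \qquad r > 0.
\end{equation*}
Because $\phi_k$ is a smooth radial function and hence an even function of $r$, one has $\phi_k'(0) = 0$; integrating from $0$ to $r$ then forces $\phi_k'(r) > 0$ on $(0, \infty)$. Thus $\phi_k$ is strictly increasing on $[0, \infty)$, and $L := \lim_{r \to \infty} \phi_k(r) \in (-\infty, +\infty]$ exists.

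The crux is to rule out $L > 0$, for then strict monotonicity yields $\phi_k(r) < L \leq 0$ on $[0, \infty)$ and the induction closes. Suppose, for contradiction, that $L > 0$, so that $\phi_k(r) \geq L/2$ for $r \geq R_0$ with $R_0$ large. I would then iterate the identity $\bigl(r^{2N-2} \phi_j'(r)\bigr)' = -\, r^{2N-2}\, \phi_{j+1}(r)$ downward for $j = k-1, k-2, \dots, 0$: two integrations (using boundary data at $R_0$) convert a bound $\phi_{j+1}(r) \geq c\, r^{2i}$ on $[R_0, \infty)$ into a bound $\phi_j(r) \leq -c'\, r^{2(i+1)}$ on $[R_0, \infty)$ with $c' > 0$ (the sign alternates at each stage but the magnitude gains a factor $r^2$). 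After $k$ applications this produces
\begin{equation*}
|\psi(r)| \;=\; |\phi_0(r)| \;\geq\; c_\star\, r^{2k} \qquad \text{for all $r$ sufficiently large},
\end{equation*}
with $c_\star > 0$. But \eqref{eqLinearGrowth} gives $\psi(r) = \overline{u}(x_0, r) \leq C(1 + r)$, and since $2k \geq 2 > 1$ this is a contradiction. As $x_0 \in \Rset^{2N-1}$ was arbitrary, the lemma follows.

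The main obstacle is the bookkeeping in the iterated integration just sketched: at each of the $k$ stages, integrating $(r^{2N-2} \phi_j')'$ over $[R_0, r]$ and then $\phi_j'$ again produces boundary terms involving $\phi_j(R_0)$ and $\phi_j'(R_0)$, and one must verify that these remainders are genuinely dominated by the leading $r^{2(i+1)}$ contribution as $r \to \infty$ so that the strictly positive constant $c'$ survives all $k$ passes. This is routine but requires care with signs; no subtler mechanism seems needed, as the contradiction only uses the mismatch between polynomial degree $2k \geq 2$ and the linear upper bound $\psi(r) \leq C(1+r)$.
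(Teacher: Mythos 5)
Your argument is correct, and its engine is the same one the paper uses: iterating the averaged radial identity $(r^{2N-2}\phi_j')'=-r^{2N-2}\phi_{j+1}$ to convert a persistent sign of some $(-\Delta)^k\overline u$ into polynomial growth of $\overline u$ of degree at least $2$, which collides with the linear bound from \eqref{eqLinearGrowth}. Where you differ is in how you reach the configuration that feeds this engine. The paper argues by contradiction on $\sup_{\Rset^{2N-1}}(-\Delta)^{N-i}u\geqslant 0$ and splits into two cases: if the supremum is attained at zero it invokes the maximum principle at an interior maximum, and if $(-\Delta)^{N-i}u>0$ everywhere it runs the iterated integration (the paper's sketch leaves the sign-changing case to the cited general result in \cite{ngo2016}). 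You instead re-center the spherical average at an arbitrary point $x_0$, observe that $\phi_k$ is strictly increasing once $\phi_{k+1}<0$, and rule out a nonnegative limit $L$ by the growth contradiction; strict monotonicity then delivers $\phi_k(0)=(-\Delta)^ku(x_0)<L\leqslant 0$ directly, with no case analysis and no appeal to the maximum principle. This is the standard Chen--Li-style route to super/sub-polyharmonicity; it is self-contained and handles in one stroke the exhaustiveness issue that the paper's dichotomy glosses over. The bookkeeping you flag at the end is indeed routine: at each of the $k$ descents the boundary terms contribute $O(r^{-(2N-2)})$ to $\phi_j'$ and $O(1)$ to $\phi_j$, which are swallowed by the leading $r^{2i+1}$ and $r^{2i+2}$ terms for $r$ large, so the positive constant survives.
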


\begin{proof}
This lemma can be proved by using a general result from \cite[Theorem 2]{ngo2016}. In the present scenario, its proof is rather simple and for completeness, we outline its proof. For $i \in \{1, ..., N-1\}$, suppose that 
\[
\sup_{\Rset^{2N-1}} (-\Delta)^{N-i} u \geqslant 0 > \sup_{\Rset^{2N-1}} \big \{ (-\Delta)^{N-i+1} u , ..., (-\Delta)^N u \big\}.
\]
If $(-\Delta)^{N-i} u \leqslant 0$ then there is a point $x_0 \in \Rset^{2N-1}$ such that $(-\Delta)^{N-i} u(x_0) = 0$. This implies that $x_0$ is a maximum point of $(-\Delta)^{N-i} u$; hence we must have 
\[
-\Delta (-\Delta)^{N-i} u (x_0) \geqslant 0
\]
which contradicts with our assumption $(-\Delta)^{N-i+1} u <0$. In the case $(-\Delta)^{N-i} u > 0$ everywhere in $\Rset^{2N-1}$, by induction and integration by parts, after taking the spherical average over $\partial B(0, r)$ we obtain
\begin{equation}\label{eqEstimateAverageV_p-i}
(-1)^i (-\Delta)^{N-i} \overline u  (r) \leqslant (-1)^i  (-\Delta)^{N-i} \overline u  (0) +  \sum_{l=1}^{i-1} \frac{   (-1)^{i-l}        (-\Delta)^{N-i+l} \overline u    (0) r^{2l}     }{\prod_{k=1}^{l} (2k) \prod_{k=1}^{l} [2N-1+2(k-1)] }
\end{equation}
for each $1 \leqslant i \leqslant N$ and for any $r$. In the special case $i=N$, we obtain from \eqref{eqEstimateAverageV_p-i} the following
\begin{equation}\label{eqEstimateForUbar}
(-1)^N \overline u (r) \leqslant (-1)^N  \overline u (0) +  \sum_{l=1}^{N-1} \frac{   (-1)^{N-l} (-\Delta)^l \overline u(0) r^{2l}     }{\prod_{k=1}^{l} (2k) \prod_{k=1}^{l} [2N-1+2(k-1)] }.
\end{equation}
From this we obtain a contradiction since $\overline u$ has linear growth at infinity and the leading coefficient on the right hand side of \eqref{eqEstimateForUbar} has a sign.
\end{proof}

In the rest of this section, we show how important Lemma \ref{lem-PolySubHarmonic} is by exploiting further properties of solutions of Eq. \eqref{eqMAIN}. First, we recall the following lemma in $\Rset^n$ instead of $\Rset^{2N-1}$.

\begin{lemma}\label{lem-BarrierForPolySubHarmonic}
Let $w$ be a radially symmetric, non-positive function satisfying
\[(-\Delta)^k w \leqslant 0\]
everywhere in $\Rset^n$ for each $k=1,...,m$ with $n < 2m$. Then necessarily we have
\[
r w'(r) + (n-2m) w(r) \leqslant 0, \quad r w''(r) + (n+1-2m)w'(r) \geqslant 0
\]
everywhere in $\Rset^n$. 
\end{lemma}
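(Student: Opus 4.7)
The plan is to derive both inequalities from two structural observations. Setting $\phi(r) := r w'(r) + (n-2m) w(r)$ and $\psi(r) := r w''(r) + (n+1-2m) w'(r)$, a routine computation gives the logarithmic-derivative identities
\[
\phi(r) = r^{2m-n+1}\big(r^{n-2m} w(r)\big)', \qquad \psi(r) = r^{2m-n}\big(r^{n+1-2m} w'(r)\big)',
\]
together with the relation $\phi'(r) = \psi(r)$. Since the external powers of $r$ are strictly positive, the two asserted inequalities are equivalent respectively to the monotonicity statements that $r^{n-2m}w(r)$ is non-increasing and $r^{n+1-2m} w'(r)$ is non-decreasing. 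Moreover, once $\psi \geq 0$ is established, the first inequality reduces, via $\phi'=\psi$, to identifying an appropriate sign for $\phi$ at one endpoint.

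First I would collect radial monotonicity from the sub-polyharmonic hypotheses. The radial form $\Delta w = r^{1-n}(r^{n-1}w')'$, integrated against $r^{n-1}\,dr$ on $(0,r)$, gives
\[
r^{n-1} w'(r) = \int_0^r s^{n-1} \Delta w(s)\,ds,
\]
and the assumption $-\Delta w\leq 0$ renders the right-hand side non-negative. Applying the same identity iteratively to each $v_k := (-\Delta)^k w$, which is non-positive for $k = 0, 1, \dots, m$ by assumption, yields $v_k'\geq 0$ for $k=0,\dots,m-1$. Equivalently, the iterated Laplacians of $w$ carry alternating radial monotonicity: $\Delta w$ is non-increasing, $\Delta^2 w$ is non-decreasing, and so on down to the level permitted by the hypothesis.

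Next I would prove $\psi \geq 0$ directly. Eliminating $w''$ via $rw'' = r\Delta w - (n-1) w'$ gives the equivalent form $\psi = r\Delta w - 2(m-1) w'$; substituting the integral formula for $w'$ and integrating by parts against $r^{2-2m}$ produces a sum of a term proportional to $r\Delta w(r)$ with a coefficient depending on $n-2m+2$ and a remainder of the shape $r^{1-n}\int_0^r s^n (\Delta w)'(s)\,ds$. The non-increase of $\Delta w$ signs the remainder, and if finer control is required a further integration by parts brings in $\Delta^2 w$, then $\Delta^3 w$, and so on down to $(-\Delta)^m w$. At each step the dimensional restriction $n<2m$ is precisely what is needed to absorb the boundary contribution at $r=0$ and to keep the coefficient of the leading piece non-negative. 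With $\psi \geq 0$ in hand, the first inequality then follows by integrating $\phi' = \psi\geq 0$ from $r$ to $+\infty$ and appealing to the decay of $w$ and $w'$ at infinity implicit in the application of the lemma, where $w$ arises as an iterated Laplacian of a solution to \eqref{eqMAIN}.

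The hard part will be the bookkeeping of signs through the iterated integration-by-parts: one must pair the alternating signs of the $(-\Delta)^k w$'s with the growing powers of $s$ in each successive integral, and check that $n<2m$ is sharp enough to keep every coefficient on the favourable side of zero all the way through step $m$. Subsidiary but unavoidable is the identification of an appropriate limit for $\phi$ at infinity, which in the context of the paper will rest on the linear-growth hypothesis \eqref{eqLinearGrowth} propagated into decay for the iterated Laplacians.
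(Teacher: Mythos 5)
First, a point of reference: the paper does not actually prove this lemma --- it only cites \cite[Example 2.3]{CMM} --- so your attempt is being measured against that external source rather than an argument in the text. Your structural observations are correct and are the right raw material: the identities $\phi=r^{2m-n+1}\bigl(r^{n-2m}w\bigr)'$, $\psi=r^{2m-n}\bigl(r^{n+1-2m}w'\bigr)'$, $\phi'=\psi$, the reduction $\psi=r\Delta w-2(m-1)w'$, and the monotonicity $v_k'\geqslant 0$ for $k\leqslant m-1$ all check out. But there are three genuine problems. First, the hypothesis as printed, $n<2m$, is a typo for $n>2m$: with $n<2m$ the function $w\equiv-1$ satisfies every hypothesis yet gives $rw'+(n-2m)w=2m-n>0$, violating the first conclusion, and the paper's only application (Lemma \ref{lem-SecondDerivativeIsPositive}, with $n=2N-1$ and $m=N-1$) indeed has $n>2m$. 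Your sketch asserts that $n<2m$ ``is precisely what is needed to keep the coefficients non-negative,'' whereas the coefficients that actually appear (for instance $n-2m+2$ after one integration by parts in $\int_0^r s^{n-1}\Delta w\,ds$) are signed by $n>2m$. Second, deducing $\phi\leqslant 0$ from $\psi\geqslant 0$ by ``decay of $w$ at infinity'' is illegitimate: the lemma hypothesizes no decay. The correct endpoint argument uses only $w\leqslant 0$: if $\phi(r_0)>0$, then $\phi\geqslant\phi(r_0)$ for $r\geqslant r_0$, so $\bigl(r^{n-2m}w\bigr)'\geqslant\phi(r_0)r^{n-2m-1}$, and since $n>2m$ the integral $\int^{\infty}s^{n-2m-1}\,ds$ diverges, forcing $w>0$ for large $r$ --- a contradiction.

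Third, and most seriously, the central step $\psi\geqslant 0$ is not actually proved, and the ``direct iterated integration by parts'' you describe does not close. Already for $m=2$ the required inequality is $r^n\Delta w(r)\geqslant 2\int_0^r s^{n-1}\Delta w(s)\,ds$, and knowing only that $\Delta w$ is non-negative and non-increasing is not enough: if $\Delta w$ behaves like $r^{-a}$ at infinity one needs $a\leqslant n-2m+2$, which is exactly borderline relative to what superharmonicity of $\Delta w$ provides, and pushing the integration by parts one more level produces a kernel $K(r,t)$ that changes sign on $(0,r)$, so the sign of $\Delta^2w$ alone does not settle the term. The information that rescues the argument is precisely the \emph{first} inequality of the lemma applied to $-\Delta w$ at level $m-1$: it says $\bigl(r^{n-2m+2}\Delta w\bigr)'\geqslant 0$, whence
\[
r^{n-1}w'(r)=\int_0^r s^{2m-3}\bigl[s^{n-2m+2}\Delta w(s)\bigr]\,ds\leqslant\frac{r^{n}\Delta w(r)}{2m-2},
\]
which is exactly $\psi\geqslant 0$. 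So the two inequalities are not provable independently as your plan suggests; the proof is an induction on $m$ in which the first inequality at level $m-1$ (for $-\Delta w$) yields the second at level $m$ (for $w$), and the second then yields the first by the endpoint argument above, with the base case $m=1$ being $\psi=r\Delta w\geqslant 0$. Your proposal identifies the right objects but misses this recursive coupling, which is the actual content of the cited result.
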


\begin{proof}
See \cite[Example 2.3]{CMM}.
\end{proof}

Using Lemma \ref{lem-BarrierForPolySubHarmonic} we can prove that $\overline u''$ has a sign. Such a result has some role in our analysis. In particular, this helps us to deduce that any solution of Eq. \eqref{eqMAIN} must grow at least linearly at infinity; see Lemma \ref{lem-AlphaIsPositive} below.

\begin{lemma}\label{lem-SecondDerivativeIsPositive}
All positive solutions $u$ of \eqref{eqMAIN} with the growth \eqref{eqLinearGrowth} satisfy 
$$\overline u'' (r) \geqslant 0$$
for any $r>0$.
\end{lemma}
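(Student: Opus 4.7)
My plan is to reduce the inequality $\overline u''(r) \geq 0$ to a monotonicity statement for $r w_1(r)$, where $w_1 := (-\Delta)\overline u$, and then to deduce this monotonicity from Lemma \ref{lem-BarrierForPolySubHarmonic}.

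\emph{Step 1: the sub-polyharmonic hierarchy for $\overline u$.} Since the Laplacian commutes with spherical averaging on radial functions, one has $(-\Delta)^k \overline u = \overline{(-\Delta)^k u}$ for every $k \geq 0$. Combining this identity with Lemma \ref{lem-PolySubHarmonic}, which gives $(-\Delta)^k u < 0$ for $k = 1, \ldots, N-1$, together with the equation $(-\Delta)^N u = -u^{-(4N-1)} < 0$, one obtains $w_k := (-\Delta)^k \overline u \leq 0$ in $\Rset^{2N-1}$ for every $k = 1, 2, \ldots, N$. In particular, integrating the identity $(r^{2N-2}\overline u')'(r) = -r^{2N-2} w_1(r) \geq 0$ gives $\overline u'(r) \geq 0$.

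\emph{Step 2: reduction to monotonicity of $r w_1(r)$.} Using the radial Laplacian $\overline u'' = -w_1 - \tfrac{2N-2}{r}\overline u'$ and the integrated form $r^{2N-2}\overline u'(r) = -\int_0^r s^{2N-2} w_1(s)\,ds$, multiplication by $r^{2N-1}$ yields
\[
r^{2N-1}\overline u''(r) = -r^{2N-1} w_1(r) + (2N-2)\int_0^r s^{2N-2} w_1(s)\,ds.
\]
Differentiating in $r$ and simplifying produces
\[
\frac{d}{dr}\bigl(r^{2N-1}\overline u''(r)\bigr) = -r^{2N-2}\bigl(r w_1(r)\bigr)'.
\]
Because $r^{2N-1}\overline u''(r) \to 0$ as $r \to 0^+$, the desired inequality $\overline u''(r) \geq 0$ is equivalent to the nonincreasing character of $r \mapsto r w_1(r)$, i.e.\ $(r w_1)'(r) \leq 0$.

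\emph{Step 3: application of Lemma \ref{lem-BarrierForPolySubHarmonic}.} With $n = 2N-1$ and $m = N$, the hypothesis $n < 2m$ of Lemma \ref{lem-BarrierForPolySubHarmonic} is met, and since $n = 2m-1$, the first conclusion reads $r w_1'(r) - w_1(r) \leq 0$. Combined with $w_1 \leq 0$ this gives $(r w_1)'(r) = w_1(r) + r w_1'(r) \leq 2 w_1(r) \leq 0$, which is exactly what Step 2 requires.

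The main obstacle is the verification of the full hypothesis of Lemma \ref{lem-BarrierForPolySubHarmonic} for $w = w_1$: while $(-\Delta)^k w_1 = w_{k+1} \leq 0$ holds for $k = 1, \ldots, N-1$ by Step 1, the case $k = N$ reduces to a sign statement on $w_{N+1} = \overline{\Delta(u^{-(4N-1)})}$, and the expression
\[
\Delta(u^{-(4N-1)}) = (4N-1)\, u^{-4N-1}\bigl[4N\,|\nabla u|^2 - u\,\Delta u\bigr]
\]
has competing positive and negative contributions. I expect the hardest step of the proof is to control this term, exploiting both the sub-polyharmonic bounds of Lemma \ref{lem-PolySubHarmonic} and the linear growth hypothesis \eqref{eqLinearGrowth} to obtain the required sign after averaging; alternatively, one may need to apply Lemma \ref{lem-BarrierForPolySubHarmonic} to a suitably modified function rather than directly to $w_1$.
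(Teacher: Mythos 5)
Your Steps 1--2 are correct and give a clean reduction: the identity $\frac{d}{dr}\bigl(r^{2N-1}\overline u''\bigr)=-r^{2N-2}\,(r w_1)'$ together with $r^{2N-1}\overline u''\to 0$ at the origin shows it suffices to prove $(rw_1)'\leqslant 0$ for $w_1=-\Delta\overline u$. This is exactly the quantity the paper's proof hinges on (there it appears as the term $r^{2N-3}(rv'+v)\leqslant 0$ with $v=w_1$, fed into a slightly more roundabout manipulation with $w=r^{2N-2}\overline u'$). The gap is in Step 3, and it is not merely a verification you postponed: applying Lemma \ref{lem-BarrierForPolySubHarmonic} with $m=N$ to $w_1$ requires $(-\Delta)^N w_1=\overline{\Delta\bigl(u^{-(4N-1)}\bigr)}\leqslant 0$, and this is \emph{false}. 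For the model solution $u=(1+|x|^2)^{1/2}$ one computes $4N|\nabla u|^2-u\Delta u=\bigl[(2N+2)|x|^2-(2N-1)\bigr]/(1+|x|^2)$, which is positive for $|x|^2>(2N-1)/(2N+2)$; hence $\overline{\Delta(u^{-(4N-1)})}(r)>0$ for all large $r$. So no amount of extra control on that term will produce the sign you need, and the route through $m=N$ cannot be repaired.

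The fix --- and what the paper actually does --- is to apply the same barrier lemma to $w_1$ with $m=N-1$ instead of $m=N$. The required hypotheses are then $(-\Delta)^k w_1=w_{k+1}\leqslant 0$ for $k=1,\dots,N-1$, i.e.\ precisely $w_2,\dots,w_N\leqslant 0$, all of which you already established in Step 1; and the first conclusion reads $rw_1'+(n-2m)w_1=rw_1'+w_1\leqslant 0$ since $n-2(N-1)=1$, which is verbatim the inequality $(rw_1)'\leqslant 0$ that Step 2 asks for --- no auxiliary sign of $w_1$ is even needed. One caveat you should be aware of: with $m=N-1$ the literal side condition ``$n<2m$'' of Lemma \ref{lem-BarrierForPolySubHarmonic} reads $2N-1<2N-2$ and fails; the paper nevertheless applies the lemma in exactly this configuration and quotes the conclusion with coefficient $+1$, so the stated inequality in the lemma is presumably a typo (your instinct to check it is what steered you to the unusable choice $m=N$). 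Modulo that, replacing $m=N$ by $m=N-1$ in your Step 3 turns your argument into a complete proof essentially equivalent to the paper's.
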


\begin{proof}
Thanks to Lemma \ref{lem-PolySubHarmonic} and Eq. \eqref{eqMAIN}, with $v=-\Delta \overline u$ there holds
\[(-\Delta)^k v< 0\]
everywhere in $\Rset^{2N-1}$ for any $k=1,...,N-1$. Since $v$ is bounded from above by zero, we can apply Lemma \ref{lem-BarrierForPolySubHarmonic} to get
\[
r v'(r) + v(r) \leqslant  0, \quad r v''(r) + 2 v'(r) \geqslant 0.
\]
Using the formula $-r^{2-2N}(r^{2N-2} \overline u')' = v$ and the inequality $r v' + v <  0$, we deduce that
\[\begin{split}
-(r^{2N-2} \overline u')'' = &(r^{2N-2}v)' = (2N-2)r^{2N-3} v  + r^{2N-2} v'\\
=& r^{2N-3} \big( rv' + v\big) + (2N-3)r^{2N-3} v\\
\leqslant & (2N-3)r^{2N-3} v.
\end{split}\]
Thus, we have just proved that
\[
-(r^{2N-2} \overline u')'' \leqslant  (2N-3)r^{2N-3} v = (2N-3)r^{-1} (-r^{2N-2} \overline u')'.
\]
Therefore, if we set $w = r^{2N-2} \overline u'$, then we obtain
\[
(-rw' + (2N-2)w)' = -rw''+ (2N-3) w' \leqslant 0.
\]
By definition, the function $rw' - (2N-2)w$ vanishes at $r=0$ and is strictly increasing on $(0, +\infty)$. It follows that
\begin{equation}\label{eq-SecondDerivativeIsPositive-1}
r (r^{2N-2} \overline u')' \geqslant (2N-2)r^{2N-2} \overline u'
\end{equation}
for any $r \geqslant 0$, which is equivalent to
\[
r^{2N-1} \overline u'' +(2N-2)r^{2N-2} \overline u' \geqslant (2N-2)r^{2N-2} \overline u'
\]
for any $r \geqslant 0$. Hence, there holds
\begin{equation}\label{eq-SecondDerivativeIsPositive-2}
r^{2N-1} \overline u''  \geqslant 0.
\end{equation}
Hence $\overline u''(r) \geqslant 0$ for all $r>0$ as claimed.
\end{proof}

In the following lemma, we study the asymptotic behavior of $(-\Delta)^k \overline u$ at infinity. Such a result is useful when we apply the Liouville theory to get integral representation for $(-\Delta)^k u$.

\begin{lemma}\label{lem-Limits}
All positive solutions $u$ of \eqref{eqMAIN} with the growth \eqref{eqLinearGrowth} satisfy
\[\lim_{r \to +\infty} (-\Delta)^k \overline u(r) = 0\]
for each $k=1,...,N-1$.
\end{lemma}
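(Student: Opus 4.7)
My plan is to analyze the radial quantities $w_k := (-\Delta)^k \overline u$ for $k = 0, 1, \ldots, N$. Starting from the radial Laplacian identity in $\Rset^{2N-1}$, namely $(r^{2N-2} w_k')' = -r^{2N-2} w_{k+1}$, and integrating from $0$ to $r$ using $w_k'(0) = 0$ (which holds since each $w_k$ is a radial $C^2$ function), I obtain the basic representation
\[
r^{2N-2} w_k'(r) = -\int_0^r s^{2N-2} w_{k+1}(s)\, ds.
\]
Lemma~\ref{lem-PolySubHarmonic} gives $w_{k+1} < 0$ for every $k+1 \in \{1, \ldots, N-1\}$, and \eqref{eqMAINafterAverage} gives $w_N \leq -\overline u^{-(4N-1)} < 0$. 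Consequently $w_k'(r) > 0$ for every $k \in \{0, 1, \ldots, N-1\}$ and every $r > 0$. Combined with the sign bound $w_k \leq 0$ for $k \in \{1, \ldots, N-1\}$ from Lemma~\ref{lem-PolySubHarmonic}, each such $w_k$ is monotone increasing and bounded above by $0$, so the limit $\ell_k := \lim_{r \to \infty} w_k(r) \in (-\infty, 0]$ exists.

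It remains to rule out $\ell_k < 0$, which I would do by contradiction. If $\ell_k < 0$ for some $k \in \{1, \ldots, N-1\}$, choose $R > 0$ with $w_k(r) \leq \ell_k/2$ on $[R, \infty)$. Since $-w_k \geq 0$ on $[0, R]$ as well, the representation for $w_{k-1}'$ yields the lower bound
\[
r^{2N-2} w_{k-1}'(r) \geq -\int_R^r s^{2N-2} w_k(s)\, ds \geq \frac{|\ell_k|}{2(2N-1)} \bigl( r^{2N-1} - R^{2N-1} \bigr)
\]
for all $r \geq R$. This gives $w_{k-1}'(r) \geq c\, r$ for $r$ large and some $c > 0$, and integrating once more yields $w_{k-1}(r) \geq c'\, r^2$ for large $r$. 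If $k \geq 2$, this contradicts $w_{k-1} \leq 0$ from Lemma~\ref{lem-PolySubHarmonic}; if $k = 1$, it contradicts the linear growth assumption \eqref{eqLinearGrowth} on $\overline u = w_0$. In either case $\ell_k = 0$, as desired.

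The argument has no genuinely hard step; it is a direct iteration of the sub-polyharmonic information from Lemma~\ref{lem-PolySubHarmonic}. The only bookkeeping to be careful about is the sign in the integral formula and the fact that the contribution from $[0, R]$ only strengthens the inequality (since $-w_k \geq 0$ on $[0, R]$), so it can safely be dropped when one seeks a lower bound on $w_{k-1}'$.
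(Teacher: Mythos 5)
Your proof is correct and follows essentially the same route as the paper: both rest on the radial identity $r^{2N-2} w_{k-1}'(r) = -\int_0^r s^{2N-2} w_k(s)\,ds$, the signs from Lemma~\ref{lem-PolySubHarmonic} and \eqref{eqMAINafterAverage} giving monotonicity, and the observation that a nonzero limit of $w_k$ would force quadratic growth of $w_{k-1}$, incompatible with $w_{k-1}\leqslant 0$ (for $k\geqslant 2$) or with the linear growth \eqref{eqLinearGrowth} (for $k=1$). The only cosmetic difference is that the paper argues directly via the bound $-v_k(r)\leqslant C\,(v_{k-1}(r)-v_{k-1}(0))/r^2$ and an induction on $k$, whereas you argue by contradiction and use the sign of $w_{k-1}$ in place of the induction hypothesis, which is marginally tidier.
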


\begin{proof}
Fix $k \in \{1,..., N-1\}$ and denote
\[
v_k (r) := (-\Delta)^k \overline u.
\]
For clarity, we also set $v_0 = \overline u$. Our aim is to prove that $v_k \to 0$ at infinity for each $k>0$. In view of Lemma \ref{lem-PolySubHarmonic}, there holds $v_k < 0$. Observe that in $\Rset^{2N-1}$ we have
\[
r^{2-2N} (r^{2N-2} v_k' )' = \Delta v_k = -(-\Delta)^{k+1} \overline u >0,
\]
which implies that $v_k' >0$. Therefore, $v_k$ has a limit at infinity. 

To prove the desired limit, let us start with $k=1$. Upon using our convention and the monotone decreasing of $-v_1$, we clearly have
\[
r^{2N-2} v_0' (r) = -\int_0^r s^{2N-2} v_1(s) ds \geqslant -\frac{r^{2N-1}}{2N-1} v_1 (r),
\]
which yields
\[
 v_0(r) \geqslant v_0(0) - C r^2 v_1 (r) \geqslant v_0(0)
\]
for some constant $C>0$. Since $v_0$ has linear growth at infinity, we deduce that $v_1 (r) \to 0$ as $r \to +\infty$. The above argument can be repeatedly used to conclude desired limits. Indeed, suppose that $v_{k-1} (r) \to 0$ as $r \to +\infty$, we will show that $v_k (r) \to 0$ a $r \to +\infty$. To this purpose, we observe that
\[
r^{2N-2} v_{k-1}' (r) = -\int_0^r s^{2N-2} v_k(s) ds \geqslant -\frac{r^{2N-1}}{2N-1} v_k (r),
\]
which implies
\[
 v_{k-1}(r) \geqslant v_{k-1}(0) - Cr^2 v_k (r) \geqslant v_{k-1}(0)
\]
for some constant $C >0$ which depends only on $N$. Dividing both sides by $r^2$, we obtain
\[
 \frac{v_{k-1}(r)}{r^2} \geqslant \frac{v_{k-1}(0)}{r^2} + C_1 (-v_k (r)) \geqslant \frac{v_{k-1}(0)}{r^2}.
\]
We now send $r \to +\infty$ to get the desired result.
\end{proof}

\begin{lemma}\label{lem-AlphaIsPositive}
Let $u>0$ satisfy \eqref{eqMAIN} with the linear growth \eqref{eqLinearGrowth}. Then $\alpha>0$ where the constant $\alpha$ is given in \eqref{eqLinearGrowth}.
\end{lemma}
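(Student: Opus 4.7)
The plan is to argue by contradiction. Suppose that $\alpha = 0$, which in particular forces $\overline u(r)/r \to 0$ as $r \to +\infty$, since $u(x)/|x| \to \alpha$ uniformly implies $\overline u(r)/r \to \alpha$.

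The first key observation is that $\overline u'(r) > 0$ for every $r > 0$. Indeed, using the identity $r^{2-2N}(r^{2N-2}\overline u')' = \Delta \overline u = -(-\Delta)\overline u$ together with Lemma \ref{lem-PolySubHarmonic} (which gives $(-\Delta)u < 0$ and hence, taking spherical averages, $(-\Delta)\overline u < 0$), we obtain $(r^{2N-2}\overline u')' > 0$ for $r>0$. Integrating from $0$ and using smoothness of $u$ at the origin yields $r^{2N-2}\overline u'(r) > 0$ for $r>0$, whence $\overline u'(r) > 0$. Combining this with Lemma \ref{lem-SecondDerivativeIsPositive}, which gives $\overline u''(r) \geqslant 0$, we conclude that $\overline u'$ is a positive, non-decreasing function on $(0,+\infty)$.

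I then fix any $r_0 > 0$ and exploit monotonicity: for all $r \geqslant r_0$,
\[
\overline u'(r) \geqslant \overline u'(r_0) > 0.
\]
Integrating from $r_0$ to $r$ gives $\overline u(r) \geqslant \overline u(r_0) + \overline u'(r_0)(r - r_0)$, and dividing by $r$ and sending $r \to +\infty$ yields
\[
\alpha = \lim_{r \to +\infty} \frac{\overline u(r)}{r} \geqslant \overline u'(r_0) > 0,
\]
contradicting the assumption $\alpha = 0$.

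The argument is essentially a soft one-liner once Lemmas \ref{lem-PolySubHarmonic} and \ref{lem-SecondDerivativeIsPositive} are in hand; the only subtle point is making sure that the strict positivity of $\overline u'$ follows cleanly from the sub poly-harmonic property, which is why I would state the derivation of $\overline u'(r) > 0$ explicitly rather than quoting the proof of Lemma \ref{lem-Limits} in passing. No new technology is required.
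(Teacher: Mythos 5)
Your proof is correct and follows essentially the same route as the paper: the paper likewise combines $\overline u''\geqslant 0$ from Lemma \ref{lem-SecondDerivativeIsPositive} with the strict positivity $\overline u'(1)>0$ (which, as in your write-up, comes from $\Delta\overline u>0$ via Lemma \ref{lem-PolySubHarmonic}) to get $\overline u(r)\geqslant \overline u'(1)(r-1)+\overline u(1)$ and hence $\alpha\geqslant\overline u'(1)>0$. Your only cosmetic differences are the contradiction framing (unnecessary, since the argument gives $\alpha>0$ directly) and the explicit justification of $\overline u'>0$, which the paper leaves implicit.
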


\begin{proof}
In view of Lemma \ref{lem-SecondDerivativeIsPositive}, the inequality $\overline u'' (r) \geqslant 0$ implies that $\overline u' (r) \geqslant \overline u' (1) > 0$ for any $r \geqslant 1$. From this we obtain 
\[
 \overline u (r) \geqslant \overline u' (1) (r - 1) + \overline u (1)
\]
for all $r \geqslant 1$. The above inequality tells us that $u$ grows at least linearly at infinity, moreover, if the limit $\lim_{|x| \to +\infty} u(x)/|x| = \alpha \geqslant 0$ exists uniformly, it must hold $\alpha>0$ thanks to $\overline u'(1)>0$.
\end{proof}

\section{A classification result: Proof of Theorem \ref{thmCLASSIFICATION}}

The main purpose in this section is to provide a proof of Theorem \ref{thmCLASSIFICATION}. First if we set
 \[
 U(x) = c_0 \int_{\Rset^{2N-1}} {|x - y| u^{-(4N-1)}(y)dy} .
 \]
with the constant $c_0 > 0$ given by \eqref{eqConstantsC0}. Note that by the definition of the constants $c_i$ in \eqref{eqConstantsC}, there holds
\[
c_{N-k-1} \Delta_x (|x-y|^{2k-2N+3}) = -c_{N-k} |x-y|^{2k-2N+1} 
\] 
Therefore, an easy calculation shows that
\begin{equation}\label{eqDelta^kU}
(-\Delta)^k U(x) = - c_k \int_{\Rset^{2N-1}} \frac{ u^{-(4N-1)}(y) }{ |x-y|^{2k-1} } dy
\end{equation}
for $k = 1,.., N-1$ with the constant $c_k > 0$ given by \eqref{eqConstantsC} and
\[
(-\Delta)^N U(x) = - u^{-(4N-1)}.
\]
In particular, 
\[
(-\Delta)^k U(x)<0
\]
everywhere on $\Rset^{2N-1}$. Recall that the function $u$ solves $\Delta^N u = (-1)^{N-1} u^{-(4N-1)}$ in $\Rset^{2N-1}$. For simplicity, we set
\[
U_k (x) =(-\Delta)^k U(x) .
\]
We now prove the following important properties for $U_k$.

\begin{lemma}\label{lem-LimitDelta^iU}
For each fixed $k \in \{1, ..., N-1\}$, the function $U_k$ satisfies 
$$U_k (x) \to 0$$ 
as $|x| \to +\infty$.
\end{lemma}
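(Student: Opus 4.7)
The plan is to bound the integral defining $U_k$ by splitting the domain of integration into three natural pieces, according to whether $|y|$ is small, comparable to, or large compared with $|x|=:R$. The key analytic input is that under the linear growth assumption \eqref{eqLinearGrowth} and Lemma \ref{lem-AlphaIsPositive}, one has $u(y)\geqslant c\,(1+|y|)$ for some constant $c>0$ (locally $u$ is bounded below away from zero by positivity and continuity, and at infinity by $\alpha>0$). In particular
\[
u^{-(4N-1)}(y)\leqslant C\,(1+|y|)^{-(4N-1)},
\]
which is integrable on $\Rset^{2N-1}$ since $4N-1>2N-1$, and decays sufficiently fast at infinity to make every manipulation below legitimate.

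Let $R=|x|$. First I would handle the inner region $\{|y|\leqslant R/2\}$: here $|x-y|\geqslant R/2$, so
\[
\int_{|y|\leqslant R/2}\frac{u^{-(4N-1)}(y)}{|x-y|^{2k-1}}\,dy\leqslant \Big(\frac{R}{2}\Big)^{-(2k-1)}\int_{\Rset^{2N-1}}u^{-(4N-1)}(y)\,dy,
\]
and since the right-hand integral is finite, this contribution is $O(R^{-(2k-1)})\to 0$. Next, on the outer region $\{|y|\geqslant 2R\}$ one has $|x-y|\geqslant |y|/2\geqslant R$, so the same bound $|x-y|^{-(2k-1)}\leqslant (2R)^{-(2k-1)}$\cdot (something finite) gives a contribution bounded by $CR^{-(2k-1)}$, again tending to $0$.

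The genuinely interesting piece is the intermediate annulus $\{R/2\leqslant|y|\leqslant 2R\}$, where the kernel is singular and $u^{-(4N-1)}$ is not automatically small pointwise. Here I would use the uniform decay $u^{-(4N-1)}(y)\leqslant C\,R^{-(4N-1)}$ (valid once $R$ is large, by the linear growth) to pull this factor out, and then change variables $z=x-y$ to estimate
\[
\int_{R/2\leqslant|y|\leqslant 2R}\frac{dy}{|x-y|^{2k-1}}\leqslant\int_{|z|\leqslant 3R}\frac{dz}{|z|^{2k-1}}=C\,R^{2N-2k},
\]
using that $2k-1<2N-1$, so the singularity at $z=0$ is integrable. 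Combining, the annular contribution is at most $C\,R^{-(4N-1)}\cdot R^{2N-2k}=C\,R^{-(2N+2k-1)}\to 0$.

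Adding the three pieces and multiplying by $c_k$ yields $U_k(x)\to 0$ as $|x|\to\infty$, as required. The main obstacle is the middle region, since there both the kernel and the density can be large; it is only the uniform linear lower bound $u(y)\geqslant c|y|$ at infinity, together with the local integrability of $|z|^{-(2k-1)}$ in $\Rset^{2N-1}$ (which holds precisely because $k\leqslant N-1$), that makes the estimate go through.
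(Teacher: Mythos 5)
Your proof is correct, but it follows a genuinely different decomposition from the paper's. The paper splits the integral into only two pieces according to whether $|x-y|\leqslant\delta$ or $|x-y|>\delta$ for a small fixed $\delta$: near the singularity it uses the boundedness of $u^{-(4N-1)}$ together with the local integrability of $|z|^{-(2k-1)}$ in $\Rset^{2N-1}$ (exactly your observation that $k\leqslant N-1$ makes $\int_0^\delta s^{2N-2k-1}\,ds$ finite), and on the complement it invokes the dominated convergence theorem (with $\delta^{-(2k-1)}u^{-(4N-1)}$ as dominating function) to send the contribution to zero as $|x|\to+\infty$. Your three-region decomposition by the size of $|y|$ relative to $R=|x|$ replaces the dominated convergence step by explicit estimates and therefore yields a quantitative decay rate $U_k(x)=O(|x|^{-(2k-1)})$, which the paper's qualitative argument does not provide; the price is the extra work on the intermediate annulus, where you correctly combine the pointwise bound $u^{-(4N-1)}(y)\leqslant CR^{-(4N-1)}$ (legitimate since Lemma \ref{lem-AlphaIsPositive} gives $\alpha>0$) with $\int_{|z|\leqslant 3R}|z|^{-(2k-1)}\,dz=CR^{2N-2k}$. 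Both arguments rest on the same two inputs, namely $\int_{\Rset^{2N-1}}u^{-(4N-1)}\,dy<+\infty$ and the integrability of the kernel's singularity. One cosmetic slip: on the outer region, $|x-y|\geqslant R$ gives $|x-y|^{-(2k-1)}\leqslant R^{-(2k-1)}$ rather than $(2R)^{-(2k-1)}$; the conclusion $O(R^{-(2k-1)})$ is unaffected.
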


\begin{proof}
It follows from \eqref{eqLinearGrowth} that there exists $R>0$ such that if $|x|>R$ then $u(x)> \alpha |x| /2$. This implies that
\[\int_{\Rset^{2N-1}} |x-y|^{2k} u^{ -(4N-1)}(y)dy < +\infty\]
for all $k=1,..., N-1$. In particular, we have $\int_{\Rset^{2N-1}} {u^{ - (4N-1)}(y)dy} < +\infty$ is finite and $u^{-(4N-1)}(x)$ is bounded function, say by $M>0$. By Eq. \eqref{eqDelta^kU}, we have
\[
U_k (x) = - c_k \int_{\Rset^{2N-1}} \frac{ u^{-(4N-1)}(y) }{ |x-y|^{2k-1} } dy.
\]
For given $\varepsilon>0$, there exists some $\delta>0$ small enough such that
\[
\int_{|x-y| \leqslant \delta} {\frac{{u^{ - (4N-1)}(y)}}{|x - y|^{2k-1}}dy} \leqslant CM \int_0^\delta s^{2N-2k-1} ds < \frac \varepsilon 2
\]
for any $x \in \Rset^{2N-1}$. In the region $\{ |x-y| \geqslant \delta \}$, we can use the dominated convergence theorem to conclude that 
\[
\lim_{|x| \to +\infty} \int_{|x-y| > \delta} {\frac{{u^{ - (4N-1)}(y)}}{|x - y|^{2k-1}}dy}=0.
\]
Therefore,
\[
\int_{|x-y|> \delta} {\frac{{u^{ - (4N-1)}(y)}}{|x - y|^{2k-1}}dy} < \frac \varepsilon 2
\]
for any large $x \in \Rset^{2N-1}$. This shows that $U_k (x)$ has the limit zero at infinity.
\end{proof}

Following the method used in \cite{ChoiXu}, to prove our main theorem, we need to establish an integral representation for $\Delta^k u$ for any $k \in \{1, ..., N-1\}$. First, for $\Delta^{N-1}u$, we prove the following result.

\begin{lemma}\label{lem-Delta^(N-1)}
Let $u$ satisfy \eqref{eqMAIN} with the linear growth \eqref{eqLinearGrowth}. Then the following representation
\begin{equation}\label{eqDelta^(N-1)U}
(-\Delta)^{N-1} u(x) = - c_{N-1} \int_{\Rset^{2N-1}} \frac{ u^{-(4N-1)}(y) }{ |x-y|^{2N-3} } dy
\end{equation}
holds with the constant $c_{N-1} > 0$ given in \eqref{eqConstantsC}.
\end{lemma}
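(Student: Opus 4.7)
The plan is to set $w = (-\Delta)^{N-1} u$ and denote the right-hand side of \eqref{eqDelta^(N-1)U} by $W(x)$; then show that $w$ and $W$ satisfy the same Poisson equation with the same decay at infinity, and finally use a one-sided Liouville theorem to conclude $w \equiv W$.

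First I would record the following ingredients. On one hand, from \eqref{eqMAIN} we have $\Delta w = u^{-(4N-1)} > 0$, so $w$ is subharmonic; Lemma \ref{lem-PolySubHarmonic} forces $w < 0$ on $\Rset^{2N-1}$, and Lemma \ref{lem-Limits} gives $\overline w(r) \to 0$ as $r \to +\infty$. On the other hand, the integral defining $W$ converges absolutely thanks to the linear growth of $u$ (so $u^{-(4N-1)}(y)$ is bounded and decays like $|y|^{-(4N-1)}$ at infinity); the proof of Lemma \ref{lem-LimitDelta^iU} applied verbatim yields $W(x) \to 0$ as $|x| \to +\infty$, and consequently $W$ is continuous and bounded on $\Rset^{2N-1}$. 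Finally, using that $-c_{N-1}|x-y|^{-(2N-3)}$ is the fundamental solution of $\Delta$ in $\Rset^{2N-1}$, a direct computation (identical in spirit to \eqref{eqDelta^kU} applied to the auxiliary function $U$ with $k=N$) gives $\Delta W = u^{-(4N-1)}$.

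The key step is then to consider the difference $V := w - W$. By construction $\Delta V \equiv 0$ on $\Rset^{2N-1}$, so $V$ is harmonic. Since $w \leqslant 0$ everywhere and $W$ is bounded, $V$ is bounded from above on $\Rset^{2N-1}$. Applying the one-sided Liouville theorem to the non-negative harmonic function $\sup_{\Rset^{2N-1}} V - V$, I conclude that $V$ is constant. To pin down the value of this constant, I would use the mean value property
\[
V(0) = \fint_{\partial B(0,r)} V \, d\sigma = \overline w(r) - \overline W (0,r),
\]
valid for every $r > 0$, and let $r \to +\infty$: by Lemma \ref{lem-Limits} the first term tends to $0$, while $\overline W(0,r) \to 0$ since $W$ vanishes at infinity. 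Hence $V \equiv 0$, which is exactly \eqref{eqDelta^(N-1)U}.

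The main obstacle I anticipate is precisely the one-sided boundedness of $V$: the whole argument hinges on knowing a priori that $w \leqslant 0$, which is not obvious from the equation alone but has already been extracted in Lemma \ref{lem-PolySubHarmonic}. All remaining ingredients (convergence of the defining integral, decay at infinity, the identity $\Delta W = u^{-(4N-1)}$) are standard Green-function computations analogous to those already carried out in the excerpt.
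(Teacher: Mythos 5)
Your proposal is correct and follows essentially the same route as the paper: define the candidate integral $U_{N-1}$, verify $\Delta U_{N-1}=u^{-(4N-1)}$ via the Green function so that the difference with $(-\Delta)^{N-1}u$ is harmonic, invoke the one-sided Liouville theorem using the non-positivity of $(-\Delta)^{N-1}u$ (Lemma \ref{lem-PolySubHarmonic}) together with the boundedness of $U_{N-1}$ (Lemma \ref{lem-LimitDelta^iU}), and kill the resulting constant by taking spherical averages and letting $r\to+\infty$ with Lemmas \ref{lem-Limits} and \ref{lem-LimitDelta^iU}. No substantive differences.
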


\begin{proof}
Upon using the notation for $U_k$ mentioned at the beginning of this section, $U_{N-1}$ is exactly the right hand side of \eqref{eqDelta^(N-1)U}, that is
\[
U_{N-1} (x) = - c_{N-1} \int_{\Rset^{2N-1}} \frac{u^{-(4N-1)}(y)}{|x - y|^{2N-3}}dy.
\]
We also denote an upper bound of $u^{-(4N-1)}$ by $M$. By Lemma \ref{lem-LimitDelta^iU}, we know that $U_{N-1}$ is bounded. Note that $-c_{N-1}|x-y|^{-(2N-3)}$ is the Green function of $\Delta$ in $\Rset^{2N-1}$, therefore an easy calculation shows that
\[\begin{split}
\Delta U_{N-1} (x) =& \int_{\Rset^{2N-1}} \Delta_x \Big( \frac{-c_{N-1}}{|x - y|^{2N-3}} \Big) u^{-(4N-1)}(y) dy = u^{ - (4N-1)} (x).
\end{split}\]
Now it follows from the equations satisfied by $U_{N-1}$ and $u$ that
\[\Delta ((-\Delta)^{N-1}u - U_{N-1}) = 0\]
in $\Rset^{2N-1}$. Since $U_{N-1}$ is bounded and $(-\Delta)^{N-1}u$ is non-positive, we deduce that $(-\Delta)^{N-1} u-U_{N-1}$ is a harmonic function which is bounded either from above. Thus the Liouville theorem can be applied to conclude that
\begin{equation}\label{eqDelta^(N-1)U=} 
(-\Delta)^{N-1} u = U_{N-1} + b_{N-1}
\end{equation}
for some constant $b_{N-1}$. To get rid of the constant $ b_{N-1}$, we take the spherical average both sides of \eqref{eqDelta^(N-1)U=} to get
\[
v_{N-1} (r) = \overline U_{N-1} (r) + b_{N-1}
\]
where $v_{N-1}$ is defined in the proof of Lemma \ref{lem-Limits}. Taking the limit as $r \to +\infty$ we deduce that $b_{N-1}=0$, thanks to Lemmas \ref{lem-Limits} and \ref{lem-LimitDelta^iU}.
\end{proof}

By repeating the argument used in the proof of Lemma \ref{lem-Delta^(N-1)}, we easily obtain the following result for $\Delta^ku$ for each $k \in \{1,...,N-2\}$.

\begin{lemma}\label{lem-Delta^kU}
Let $u$ satisfy \eqref{eqMAIN} with the linear growth \eqref{eqLinearGrowth}. Then for each $k=1,...,N-1$, the following representation
\begin{equation}\label{eqDeltakU}
(-\Delta)^{N-k} u(x) = - c_{N-k} \int_{\Rset^{2N-1}} \frac{ u^{-(4N-1)}(y) }{ |x-y|^{2N-1-2k} } dy.
\end{equation}
holds with the constant $c_{N-k} >  0$ given in \eqref{eqConstantsC}.
\end{lemma}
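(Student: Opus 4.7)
The plan is to proceed by induction on $k\in\{1,\ldots,N-1\}$, the base case $k=1$ being exactly Lemma \ref{lem-Delta^(N-1)}. For the inductive step, I assume that for some $k\in\{1,\ldots,N-2\}$ the representation
\[
(-\Delta)^{N-k} u(x) = U_{N-k}(x) = -c_{N-k} \int_{\Rset^{2N-1}} \frac{u^{-(4N-1)}(y)}{|x-y|^{2N-1-2k}}\,dy
\]
has already been established, and I aim to promote it to level $N-k-1$.

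The first step will be the elementary computation
\[
-\Delta U_{N-k-1}(x) = U_{N-k}(x),
\]
which follows by differentiating under the integral sign and invoking the identity
\[
c_{N-k-1}\,\Delta_x\bigl(|x-y|^{2k-2N+3}\bigr) = -c_{N-k}\,|x-y|^{2k-2N+1}
\]
recorded at the start of Section 3. Combined with the inductive hypothesis this yields
\[
\Delta\bigl[\,U_{N-k-1} - (-\Delta)^{N-k-1}u\,\bigr] = 0 \quad \text{on } \Rset^{2N-1}.
\]

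To remove the harmonic ambiguity I will apply the Liouville theorem exactly as in Lemma \ref{lem-Delta^(N-1)}. By Lemma \ref{lem-LimitDelta^iU} the Riesz-type potential $U_{N-k-1}$ is continuous and vanishes at infinity, hence bounded on $\Rset^{2N-1}$. On the other hand, since $N-k-1\in\{1,\ldots,N-2\}$, Lemma \ref{lem-PolySubHarmonic} supplies the one-sided sign $(-\Delta)^{N-k-1}u<0$ everywhere. The harmonic function $U_{N-k-1} - (-\Delta)^{N-k-1}u$ is therefore bounded from below and reduces to a constant $b_{N-k-1}$. To kill this constant I will take the spherical average of $(-\Delta)^{N-k-1} u = U_{N-k-1} - b_{N-k-1}$ about the origin and let the radius tend to infinity: Lemma \ref{lem-Limits} sends the left-hand side to $0$, and Lemma \ref{lem-LimitDelta^iU} sends $\overline{U}_{N-k-1}$ to $0$, forcing $b_{N-k-1}=0$.

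Conceptually this loop, harmonic difference then Liouville then spherical averaging, is the one already used at level $N-1$, so the only genuine work is bookkeeping of the sequence of constants $c_{N-k}$ and of the signs picked up from each Laplacian. The main obstacle, such as it is, is ensuring at every inductive level that the Liouville step has the one-sided bound it needs; this is precisely where the sub poly-harmonic property $(-\Delta)^j u<0$ for every $j\in\{1,\ldots,N-1\}$ from Lemma \ref{lem-PolySubHarmonic} is indispensable, since without it the difference would only be known to be harmonic with no growth control, and the induction would collapse.
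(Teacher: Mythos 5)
Your proposal is correct and follows essentially the same route as the paper: induction on $k$ with Lemma \ref{lem-Delta^(N-1)} as the base case, the Laplacian identity for the kernels $|x-y|^{2k-2N+3}$, the Liouville theorem applied to the harmonic difference (one-sided bounded thanks to Lemma \ref{lem-PolySubHarmonic} and the boundedness of $U_{N-k-1}$ from Lemma \ref{lem-LimitDelta^iU}), and spherical averaging together with Lemmas \ref{lem-Limits} and \ref{lem-LimitDelta^iU} to eliminate the constant.
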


\begin{proof}
We prove \eqref{eqDeltakU} by induction on $k$. Clearly \eqref{eqDeltakU} holds for $k=1$ by Lemma \ref{lem-Delta^(N-1)}. Suppose that \eqref{eqDeltakU} holds for $k$, that is
\[
(-\Delta)^{N-k} u(x) = -c_{N-k} \int_{\Rset^{2N-1}} \frac{ u^{-(4N-1)}(y) }{ |x-y|^{2N-1-2k} } dy 
\]
we prove \eqref{eqDeltakU} for $k+1$, that is
\[\begin{split}
(-\Delta)^{N-k-1} u(x) =&  - c_{N-k-1} \int_{\Rset^{2N-1}} \frac{ u^{-(4N-1)}(y) }{ |x-y|^{2N-3-2k} } dy. 
\end{split}\]
Notice that
\[\begin{split}
U_{N-k-1} (x) =&- c_{N-k-1} \int_{\Rset^{2N-1}} \frac{ u^{-(4N-1)}(y) }{ |x-y|^{2N-3-2k} } dy .
\end{split}\]
Clearly, the function $U_{N-k-1}$ is bounded by means of Lemma \ref{lem-LimitDelta^iU}. Hence
\[
\Delta U_{N-k-1} (x) = -c_{N-k-1} \int_{\Rset^{2N-1}} u^{-(4N-1)}(y) \Delta_x \Big( \frac{ 1 }{ |x-y|^{2N -3 - 2k} } \Big)dy.
\]
Note that by the definition of the constants $c_i$ in \eqref{eqConstantsC}, there holds
\[
c_{N-k-1} \Delta_x (|x-y|^{2k-2N+3}) = -c_{N-k} |x-y|^{2k-2N+1} 
\]
Therefore,
\[\Delta ((-\Delta)^{N-k-1}u - U_{N-k-1}) = 0\]
in $\Rset^{2N-1}$. Since $U_{N-k-1}$ is bounded and $(-\Delta)^{N-k-1}u$ is non-positive, we deduce that $(-\Delta)^{N-k-1} u -U_{N-k-1}$ is a harmonic function which is bounded either from above. Thus the Liouville theorem can be applied to conclude that
\begin{equation}\label{eqDelta^(N-k-1)U} 
(-\Delta)^{N-k-1} u = U_{N-k-1} + b_{N-k-1}
\end{equation}
for some constant $b_{N-k-1}$ Taking the spherical average both sides of \eqref{eqDelta^(N-k-1)U} to get
\[
v_{N-k-1} (r) = \overline U_{N-k-1} (r) + b_{N-k-1}
\]
where $v_{N-k-1}$ is defined in the proof of Lemma \ref{lem-Limits}. Taking the limit as $r \to +\infty$ we deduce that $b_{N-k-1}=0$, thanks to Lemmas \ref{lem-Limits} and \ref{lem-LimitDelta^iU}. This completes the present proof.
\end{proof}


Using Lemma \ref{lem-Delta^kU}, we obtain the following representation of $\Delta u$ as follows.
\begin{equation}\label{eqDeltaU}
\Delta  u(x) = c_1 \int_{\Rset^{2N-1}} \frac{ u^{-(4N-1)}(y) }{ |x-y| } dy
\end{equation}
with the constant $c_1$ given in \eqref{eqConstantsC}. Then using \eqref{eqDeltaU}, we obtain a representation for $u$ as the following.
 
\begin{lemma}\label{lem-RepresentationU}
There exists a constant $\gamma$ such that $u$ has the following representation
\begin{equation}\label{eqU}
u(x) = c_0 \int_{\Rset^{2N-1}} {|x - y| u^{-(4N-1)}(y)dy} + \gamma
\end{equation}
with the constant $c_0$ given by \eqref{eqConstantsC0}.
\end{lemma}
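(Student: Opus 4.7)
The plan is to verify that the function $U_0(x):=c_0 \int_{\Rset^{2N-1}} |x-y| u^{-(4N-1)}(y) dy$ differs from $u$ by an additive constant. The strategy mirrors the one used for Lemmas \ref{lem-Delta^(N-1)} and \ref{lem-Delta^kU}: I would first show that $u - U_0$ is harmonic on $\Rset^{2N-1}$, then apply a Liouville-type argument. The essential new difficulty is that, unlike the quantities $U_k$ for $k \geqslant 1$, the integral defining $U_0$ does not produce a bounded function but one of exactly linear growth, so a refined Liouville argument is required and the uniformity in \eqref{eqLinearGrowth} must be used to kill a potential linear term.

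First I would check that $U_0$ is well-defined and has the right asymptotic profile. From the first paragraph of the proof of Lemma \ref{lem-LimitDelta^iU}, the moments $I_0 := \int_{\Rset^{2N-1}} u^{-(4N-1)}(y) dy$ and $I_1 := \int_{\Rset^{2N-1}} |y| u^{-(4N-1)}(y) dy$ are both finite. The triangle inequality $||x-y|-|x|| \leqslant |y|$ then gives
\[
\bigl| U_0(x) - c_0 |x| I_0 \bigr| \leqslant c_0 I_1,
\]
so that $U_0(x)/|x| \to c_0 I_0$ \emph{uniformly} as $|x| \to +\infty$.

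Next I would compute $\Delta U_0$. Since in $\Rset^{2N-1}$ one has $\Delta_x |x-y| = (2N-2)/|x-y|$ away from the diagonal, and since $c_0 (2N-2) = c_1$ by \eqref{eqConstantsC0}, differentiation under the integral sign yields
\[
\Delta U_0(x) = c_0 (2N-2) \int_{\Rset^{2N-1}} \frac{u^{-(4N-1)}(y)}{|x-y|} dy = c_1 \int_{\Rset^{2N-1}} \frac{u^{-(4N-1)}(y)}{|x-y|} dy,
\]
which equals $\Delta u(x)$ by \eqref{eqDeltaU}. Hence $h := u - U_0$ is harmonic on $\Rset^{2N-1}$, and since both $u$ and $U_0$ have linear growth so does $h$. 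By the classical Liouville theorem for harmonic functions with polynomial growth, $h$ must be affine, i.e.\ $h(x) = a \cdot x + \gamma$ for some $a \in \Rset^{2N-1}$ and some constant $\gamma$.

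To finish, I would divide by $|x|$ and let $|x| \to +\infty$ along an arbitrary unit direction $e=x/|x|$. Combining the uniform limit $u(x)/|x| \to \alpha$ from \eqref{eqLinearGrowth} with the uniform asymptotic $U_0(x)/|x| \to c_0 I_0$ established above, I obtain
\[
a \cdot e = \alpha - c_0 I_0,
\]
and the right hand side is independent of $e$. This forces $a = 0$ and leaves $u = U_0 + \gamma$, as claimed. The main obstacle is precisely this last step: without the uniformity of the growth assumption in \eqref{eqLinearGrowth}, one could not rule out the presence of a nonzero linear term in $h$, and the representation formula would only hold modulo such a term.
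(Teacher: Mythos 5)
Your proposal is correct and follows essentially the same route as the paper: show $\Delta(u-U_0)=0$ via \eqref{eqDeltaU}, note both functions have at most linear growth, apply the Liouville theorem to get an affine difference, and then use the uniform directional limits of $u(x)/|x|$ and $U_0(x)/|x|$ to eliminate the linear term. The only cosmetic difference is that you derive the uniform asymptotic $U_0(x)/|x|\to c_0 I_0$ from an explicit triangle-inequality bound, where the paper invokes dominated convergence.
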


\begin{proof}
Denote by $h$ the following function
\[
h(x) = c_0 \int_{\Rset^{2N-1}} {|x - y| u^{-(4N-1)}(y)dy}
 \]
and let
\[
\beta = c_0 \int_{\Rset^{2N-1}} {u^{ - (4N-1)}(y)dy} .
\]
First of all, we have
\[|
\nabla h|(x) = \Big| c_0 \int_{\Rset^{2N-1}} {\frac{x - y}{|x - y|}u^{ - (4N-1)}(y)dy} \Big| \leqslant \beta .
\]
By observing \eqref{eqConstantsC0}, we easily verify that $c_0 \Delta _x (|x - y|) = c_1 |x - y|^{-1}$. From this, it is immediate to see that $\Delta (u - h) = 0$. It follows from the dominated convergence theorem that
\[\lim_{|x| \to +\infty } \frac{h(x)}{|x|} = \beta .\]
Since both $u$ and $h$ are at most linear growth at infinity, we obtain by the generalized Liouville theorem that
\begin{equation}\label{eqRepresentationUTemp} 
u(x) = h(x) +\sum\limits_{i = 1}^{2N-1} b_i x_i + \gamma
\end{equation}
for some constants $b_i$ and $\gamma$. Denote $x/|x|$ and $(b_1, ..., b_{2N-1})$ by $\Theta$ and $\vec b$, respectively. It follows from \eqref{eqRepresentationUTemp} that
\begin{equation}\label{eqQuotientRepresentationUTemp} 
\frac{{u(x)}}{{|x|}} = \frac{{h(x)}}{{|x|}} + \vec b \cdot \Theta + \frac{\gamma }{{|x|}} .
\end{equation}
Taking the limit as $|x| \to +\infty$ to the both sides of \eqref{eqQuotientRepresentationUTemp} we get $\alpha=\beta$ and $\vec b= 0$. This finishes the proof of the lemma.
\end{proof}

In the last part of the section, we prove that $\gamma = 0$. 

\begin{lemma}\label{lem-GammaIsZero}
The constant $\gamma$ in the representation formula \eqref{eqU} is zero.
\end{lemma}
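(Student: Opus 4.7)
The plan is to exploit the conformal symmetry of Eq.~\eqref{eqMAIN} via the Kelvin transform
\[
v(x) := |x|\,u\bigl(x/|x|^2\bigr), \qquad x\in \Rset^{2N-1}\setminus\{0\},
\]
which is the natural transformation for the GJMS operator $P_{2N}$ in the critical dimension $n = 2N-1$ (the weight $|x|^{2N-n} = |x|$ is dictated by~\eqref{eqConformallyCovariantOfGJMS}). First I would check that the conformal covariance of $P_{2N}$ implies that $v$ satisfies the same equation $(-\Delta)^N v + v^{-(4N-1)} = 0$ on $\Rset^{2N-1}\setminus\{0\}$, and that the uniform linear growth hypothesis $u(y)/|y|\to\alpha$ makes $v$ extend continuously to the origin with $v(0)=\alpha>0$ and have linear growth at infinity with positive coefficient $v(x)/|x| = u(x/|x|^2)\to u(0)>0$.

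Next I would show that $v$ is in fact a classical $C^{2N}$-solution across the origin. Since $v$ is continuous with $v(0)>0$, both $v$ and $v^{-(4N-1)}$ are bounded in a small ball around $0$, so the isolated singularity of $v$ at $\{0\}$ is removable for the equation $(-\Delta)^N v = -v^{-(4N-1)}$, and elliptic regularity then upgrades $v$ to a genuine $C^{2N}$-solution on all of $\Rset^{2N-1}$. Consequently $v$ meets the hypotheses of Lemma~\ref{lem-RepresentationU}, which provides a constant $\widetilde\gamma$ with
\[
v(x) = c_0\int_{\Rset^{2N-1}}|x-w|\,v^{-(4N-1)}(w)\,dw + \widetilde\gamma =: h_v(x)+\widetilde\gamma.
\]

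On the other hand, substituting the already-established representation $u(y) = h(y) + \gamma$ directly into the definition of $v$ gives $v(x) = |x|\,h(x/|x|^2) + |x|\,\gamma$. Performing the inversion $y = w/|w|^2$ in $h(x/|x|^2)$, using the identities
\[
\bigl|\tfrac{x}{|x|^2} - \tfrac{w}{|w|^2}\bigr| = \frac{|x-w|}{|x|\,|w|},\qquad dy = |w|^{-(4N-2)}\,dw,\qquad u^{-(4N-1)}(w/|w|^2) = |w|^{4N-1}\,v^{-(4N-1)}(w),
\]
makes the powers of $|w|$ cancel exactly and yields $|x|\,h(x/|x|^2) = h_v(x)$. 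Thus $v(x) = h_v(x) + |x|\,\gamma$ for every $x\neq 0$, and comparing with $v = h_v + \widetilde\gamma$ forces $\widetilde\gamma = |x|\,\gamma$ for all $x\neq 0$, which is only possible when $\gamma = 0$ (and consequently $\widetilde\gamma = 0$).

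The main obstacle I anticipate is the removable singularity/elliptic regularity step for $v$ at the origin: Lemma~\ref{lem-RepresentationU} is formulated for $C^{2N}$-solutions on all of $\Rset^{2N-1}$, so the bounded classical solution of the $2N$-th order semilinear equation on the punctured space has to be converted into a bona fide $C^{2N}$-solution before that lemma can be applied to $v$. Once this regularity is secured, everything else reduces to the algebraic identity $\widetilde\gamma = |x|\gamma$, which is incompatible with $\widetilde\gamma$ being a constant unless $\gamma = 0$.
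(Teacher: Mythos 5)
Your inversion computation is correct and elegant: the identities $\bigl|\tfrac{x}{|x|^2}-\tfrac{w}{|w|^2}\bigr| = |x-w|/(|x||w|)$, $dy=|w|^{-(4N-2)}dw$ and $u^{-(4N-1)}(w/|w|^2)=|w|^{4N-1}v^{-(4N-1)}(w)$ do combine to give $|x|\,h(x/|x|^2)=h_v(x)$, hence $v(x)=h_v(x)+|x|\gamma$, and the Kelvin covariance $(-\Delta)^N v(x)=|x|^{-(4N-1)}\bigl((-\Delta)^N u\bigr)(x/|x|^2)$ does show that $v$ solves \eqref{eqMAIN} away from the origin. This is a genuinely different route from the paper, which instead derives a Pohozaev-type identity: it multiplies the formula for $x\cdot\nabla u$ (obtained from \eqref{eqU}) by $u^{-(4N-1)}$, integrates over $B(0,R)$, splits $|x|^2-x\cdot y=\tfrac12\bigl(|x-y|^2+|x|^2-|y|^2\bigr)$ so that the antisymmetric part cancels in the limit, and is left with $\gamma\int u^{-(4N-1)}=0$.

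However, your argument has a genuine gap exactly at the step you flagged: the claim that the singularity of $v$ at the origin is removable \emph{because $v$ and $v^{-(4N-1)}$ are bounded there}. Boundedness does not imply removability for $(-\Delta)^N$ in dimension $n=2N-1<2N$: the fundamental solution of $(-\Delta)^N$ in $\Rset^{2N-1}$ is a constant multiple of $|x|$, which is bounded and continuous but not $C^1$ at the origin. Indeed, your own identity $v(x)=h_v(x)+|x|\gamma$ makes the circularity explicit: since $h_v$ is differentiable at $0$, the one-sided derivatives of $v$ at $0$ in directions $\pm e$ are $\pm\nabla h_v(0)\cdot e+\gamma$, so $v$ is differentiable (a fortiori $C^{2N}$) at the origin \emph{if and only if} $\gamma=0$. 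Thus the hypothesis you need in order to invoke Lemma \ref{lem-RepresentationU} for $v$ is equivalent to the conclusion you are trying to prove, and no independent input (the linear growth \eqref{eqLinearGrowth} only gives $v(x)=\beta+O(|x|)$ near $0$, which cannot distinguish $\gamma=0$ from $\gamma\neq0$) is supplied to break the circle. To salvage the scheme you would need a separate proof that $u(y)-\beta|y|$ has no ``odd'' first-order correction at infinity, which is essentially what the paper's integral identity delivers.
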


\begin{proof}
An immediate consequence of Lemma \ref{lem-RepresentationU}, we obtain the representation for $\nabla u$ as follows.
\begin{equation}\label{eqNablaU}
\nabla u(x) =c_0 \int_{\Rset^{2N-1}} {\frac{x-y}{|x - y|} u^{-(4N-1)}(y)dy}
\end{equation}
From this we obtain
\begin{equation}\label{eqXdotNablaU}
x \cdot \nabla u(x) = c_0 \int_{ \Rset^{2N-1} } {\frac{{|x{|^2} - x \cdot y}}{{|x - y|}}u^{ - (4N-1)}(y)dy} .
\end{equation}
Now multiply Eq. \eqref{eqXdotNablaU} thoughout by $u^{-(4N-1)}$ and integrate the resulting equation over the ball centered at the origin with radius $R$ to obtain
\[\begin{split}
- \frac{1}{4N-2} & \int_{B(0, R)} {x \cdot \nabla {u^{ - (4N-2)}}(x)dx} \\
& =c_0 \int_{\Rset^{2N-1}}  \Big( {\int_{ B(0, R) } {\frac{{|x{|^2} - x \cdot y}}{{|x - y|}}u^{ - (4N-1)}(x)dx}   \Big)u^{ - (4N-1)}(y)dy} .
\end{split}\]
Now for the left hand side of the preceding equation, we integrate by parts to get
\begin{equation}\label{eqIntegralXdotNablaU}
\begin{split}
  - \frac{1}{4N-2} & \int_{B(0, R)} {x \cdot \nabla {u^{ - (4N-2)}}(x)dx} \\
  = & - \frac{1}{4N-2}\left[ 
\begin{split}
&R\int_{\partial B(0, R)} {{u^{ -(4N-2)}}(x)d{\sigma _x}}\\
& - (2N-1)\int_{B(0, R)} {{u^{ -(4N-2)}}(x)dx} 
\end{split}
\right] \hfill \\
  =& \frac{1}{2}\int_{B(0, R)} {{u^{ - (4N-2)}}(x)dx} - \frac{R}{4N-2}\int_{\partial B(0, R)} {{u^{ -(4N-2)}}(x)d{\sigma _x}} . 
\end{split} 
\end{equation}
For the right hand side, we notice that $|x|^2 - x \cdot y =  \big( {|x - y{|^2} + (x - y) \cdot (x + y)} \big)/2$ which leads to
\begin{align*}
c_0 \int_{ \Rset^{2N-1} } &\Big( {\int_{B(0, R)} {\frac{{|x{|^2} - x \cdot y}}{{|x - y|}}u^{ - (4N-1)}(x)dx}  \Big)u^{ - (4N-1)}(y)dy} \hfill \\
  = &\frac{c_0}{2} \int_{ \Rset^{2N-1} }  \Big( {\int_{B(0, R)} {\frac{{|x - y{|^2} + |x{|^2} - |y{|^2}}}{{|x - y|}}u^{ - (4N-1)}(x)dx}   \Big)u^{ - (4N-1)}(y)dy} \hfill \\
  =& \frac{1}{2}\int_{B(0, R)} {(u(x) - \gamma )u^{ - (4N-1)}(x)dx} \\
&+ \frac{c_0}{2}\int_{ \Rset^{2N-1} }  \Big( {\int_{B(0, R)} {\frac{{|x{|^2} - |y{|^2}}}{{|x - y|}}u^{ - (4N-1)}(x)dx}   \Big)u^{ - (4N-1)}(y)dy} . 
\end{align*} 
Here in the last step, we have used the representation formula for $u$ established in Lemma \ref{lem-RepresentationU}. Letting $R \to +\infty$, since the integrand in the last term is absolutely integrable, this term becomes $\int_{\Rset^{2N-1}}\int_{\Rset^{2N-1}}$ with the same integrand. Hence, in the limit, this last term vanishes. Since $u$ has exact linear growth at infinity and $N \geqslant 2$, the boundary term in Eq. \eqref{eqIntegralXdotNablaU} also vanishes. Hence, one gets
\[
\frac{1}{2}\int_{ \Rset^{2N-1} } {u^{ - (4N-2)}(x)dx} = \frac{1}{2}\int_{ \Rset^{2N-1} } {u^{ -(4N-2)}(x)dx} - \frac{\gamma }{2}\int_{ \Rset^{2N-1} } {u^{ - (4N-1)}(x)dx}, 
\]
which implies $\gamma=0$.
\end{proof}

\begin{proof}[Proof of Theorem \ref{thmCLASSIFICATION}]

Now we prove Theorem \ref{thmCLASSIFICATION}. Suppose that $u$ solves Eq. \eqref{eqMAIN}. Then the representation
\[
u(x) = c_0 \int_{\Rset^{2N-1}} {|x - y|{u^{-(4N-1)}}(y)dy} .
\]
for some positive constant $c_0$ is simply a consequence of Lemmas \ref{lem-RepresentationU} and \ref{lem-GammaIsZero}. From this representation, we can apply a general classification result due to Li in \cite{li2004} to conclude that $u$ takes the following form
\[
u(x) = (1+|x|^2)^{1/2}
\]
in $\Rset^{2N-1}$ up to dilations and translations.
\end{proof}

\section{Non-existence results: Proof of Theorems \ref{thmNONEXISTENCE} and \ref{thmNewNONEXISTENCE}}

\subsection{Proof of Theorem \ref{thmNONEXISTENCE}}

We prove the non-existence result in Theorem \ref{thmNONEXISTENCE} by way of contradiction. Indeed, suppose that $u$ solves Eq. \eqref{eqMAIN2} with exact linear growth $\alpha>0$ at infinity. By the equation, we note that \[
(-\Delta)^N u >0
\]
everywhere in $\Rset^{2N-1}$. Therefore, as in Lemma \ref{lem-PolySubHarmonic}, we can apply a general result from \cite[Theorem 2]{ngo2016} to get
\[
(-\Delta)^k u < 0
\]
everywhere in $\Rset^{2N-1}$ for each $k=1,...,N-1$. In particular $\Delta u<0$ which implies that
\[
\overline u'(r)<0
\]
for any $r$. Since $u$ has exact linear growth $\alpha>0$ at infinity, we deduce that
\[
u(x) \geqslant \frac \alpha 2 |x|
\]
for $|x|$ large. Hence
\[
\overline u(r) = \fint_{\partial B(0, r)} u(x) d\sigma_x \geqslant \frac \alpha 2 r
\]
for large $r$. This gives us a contradiction since $\overline u'<0$.

\subsection{Proof of Theorem \ref{thmNewNONEXISTENCE}}

We prove Theorem \ref{thmNewNONEXISTENCE} by contradiction. First, by contradiction assumption, we recover the super poly-harmonic property for solutions of \eqref{eqMAIN2} without using the linear growth property as in Lemma \ref{lem-PolySubHarmonic}. Indeed, suppose that $u$ solves \eqref{eqMAIN2} which satisfies all assumptions in the theorem, that is
\begin{equation}\label{eqLowerBound}
u(x) \geqslant 1=u(0)
\end{equation}
for all $x \in \Rset^{2N-1}$, 
\begin{equation}\label{eqFiniteEnergy}
\int_{\Rset^{2N-1}} u^{-(4N-2)} dx < +\infty,
\end{equation}
and
\begin{equation}\label{eqFiniteFinite}
\int_{\Rset^{2N-1}} |(-\Delta)^i u|^2 dx < +\infty
\end{equation}
for $i=1,..., N-1$. In the sequel, we prove that there exists a sequence of non-negative functions $U_k$ and a sequence of positive numbers $q_k >1$ such that
\[
(-\Delta)^k u = U_k 
\]
for all $k = 1, ..., N-1$ and that
\[
U_k \in L^q (\Rset^{2N-1})
\]
for all $q > q_k$. By induction, we first verify the statement for $k=N-1$. Set
\[
U_{N-1} (x) = c_{N-1} \int_{\Rset^{2N-1}} \frac{u^{-(4N-1)}(y)}{|x-y|^{2N-3}} dy,
\]
where $c_{N-1}$ is given in \eqref{eqConstantsC}. Thanks to \eqref{eqLowerBound} and \eqref{eqFiniteEnergy}, it is not hard to see that $$\int_{\Rset^{2N-1}} u^{-q} (x) dx < +\infty $$ for all $q \geqslant 4N-2$; hence $U_{N-1} \in L^{q}(\Rset^{2N-1})$ for all $q >1 = : q_{N-1}$. As in the proof of Lemma \ref{lem-Delta^(N-1)}, there holds
\begin{equation}\label{Laplace}
\Delta ((-\Delta)^{N-1}u - U_{N-1} ) = 0.
\end{equation}
On the other hand, for $r>0$ and any  $x \in \Rset^{2N-1}$, we have
\begin{align}\label{FirstChange}
\int_{B(x, r)} u^{-(4N-1)} dy 
&= - r^{2N-2} \frac{\partial}{\partial r} \Big( r^{-(2N-2)} \int_{\partial B(x, r)} (-\Delta)^{N-1} u d\sigma \Big)  .
\end{align}
After dividing both sides of \eqref{FirstChange} by $r^{2N-2}$ and integrating the resulting equation over $[0,r]$, we obtain
\begin{equation}\label{SecondChange}
\begin{split}
\int_{0}^{r} s_1^{-(2N-2)} & \Big(\int_{B(x,s_1)} u^{-(4N-1)} dy \Big)ds_1  \\
&= -  r^{-(2N-2)} \int_{\partial B(x,r)} (-\Delta)^{N-1} u d\sigma + \omega_{2N-1} (-\Delta)^{N-1} u(x).
\end{split}
\end{equation}
Multiplying both sides of \eqref {SecondChange} by $r^{2N-2}$ and integrating the result equation over $[0,r]$ to get
\begin{equation}\label{ThirdChange}
\begin{split}
\int_{0}^{r} s_2^{2N-2}  & \Big( \int_0^{s_2} s_1^{-(2N-2)} \Big(\int_{B(x, s_1)} u^{-(4N-1)} dy \Big)ds_1\Big) ds_2  \\
= &- \int_{B(x, r)} (-\Delta)^{N-1} u dy + \frac{\omega_{2N-1} }{2N-1}(-\Delta)^{N-1} u(x) r^{2N-1}\\
= &r^{2N-2} \frac{\partial}{\partial r}\Big( r^{-(2N-2)}  \int_{\partial B(x, r)} (-\Delta)^{N-2} u d\sigma \Big) \\
&+ \frac{\omega_{2N-1} }{2N-1}(-\Delta)^{N-1} u(x) r^{2N-1}. 
\end{split}
\end{equation}
Repeating the above argument to get
\begin{equation}\label{FourthChange}
\begin{split}
g(r) :=& \int_0^r s_3^{-(2N-2)} \Big( \int_{0}^{s_3} s_2^{2N-2}  \Big( \int_0^{s_2} s_1^{-(2N-2)} \Big(\int_{B(x, s_1)} u^{-(4N-1)} dy \Big)ds_1\Big) ds_2 \Big) ds_3 \\
= &r^{-(2N-2)}  \int_{\partial B(x, r)} (-\Delta)^{N-2} u d\sigma  + \omega_{2N-1} (-\Delta)^{N-2} u(x) \\
 & +\frac{\omega_{2N-1}}{2 (2N-1)} (-\Delta)^{N-1} u(x) r^2.
\end{split}
\end{equation}
Making use of the L'Hospital rule, we conclude that  
\begin{equation*}
\lim_{r \to +\infty} \frac{g(r)}{r^2} \leqslant C,
\end{equation*}
for some constant $C>0$ independent of $x$. Back to \eqref{FourthChange} to conclude that $(-\Delta)^{N-1} u$ is bounded from above in $ \Rset^{2N-1}$. Together with the fact that $U_{N-1}$ is positive everywhere, by the Liouville theorem, we obtain from \eqref{Laplace} that 
\begin{equation*}
(-\Delta)^{N-1} u - U_{N-1} = C,
\end{equation*}
everywhere in $\Rset^{2N-1}$ for some constant $C$. Since $U_{N-1} \in L^q(\Rset^{2N-1})$ for any $q>q_{N-1}$, we claim that $\lim_{|x| \to \infty} U_{N-1} (x) =0$. This combines with the condition \eqref{eqFiniteFinite} gives $C=0$. That is equivalent to 
\[
(-\Delta )^{N-1} u = U_{N-1} \geqslant 0. 
\]

Now, we suppose that 
\begin{equation*}
(-\Delta)^{N-k} u = U_{N-k}
\end{equation*}
for some non-negative function $U_{N-k}$ in $\Rset^{2N-1}$ with $U_{N-k} \in L^q(\Rset^{2N-1})$ for any $q>q_k$ for some positive constant $q_k$. Our next task is to prove that $(-\Delta)^{N-k-1} u$ has the similar property. To this purpose, we repeat the same calculation as above. Indeed, we set
\begin{equation*}
U_{N-k-1}(x) = c_{N-1} \int_{\Rset^{2N-1}} \frac {U_{N-k}(y)}{|x-y|^{2N-3}}dy.
\end{equation*}
Hence, similar to the way to obtain \eqref{FourthChange}, after several steps we arrive at
\[
\begin{split}
\int_0^r s_3^{-(2N-2)}  & \Big( \int_{0}^{s_3} s_2^{2N-2}  \Big( \int_0^{s_2} s_1^{-(2N-2)} \Big(\int_{B(x, s_1)} U_{N-k}(y)dy \Big) ds_1\Big) ds_2 \Big) ds_3 
 \\
= & r^{-(2N-2)}  \int_{\partial B(x, r)} (-\Delta)^{N-k-2}u d\sigma  + \omega_{2N-1} (-\Delta)^{N-k-2}u(x) \\
& + \frac{\omega_{2N-1} }{2 (2N-1)}(-\Delta)^{N-k-1} u(x) r^2.
\end{split}
\]
From this, it is not hard to see that the function $(-\Delta)^{N-k-1}u$ is bounded from above and 
\begin{align*}
\Delta U_{N-k-1}(x) 
&= - U_{N-k}(x) = - (-\Delta)^{N-k} u(x).  
\end{align*}
Therefore,
\begin{equation*}
\Delta ( (-\Delta)^{N-k-1}u- U_{N-k-1}) = 0.
\end{equation*}
From the positivity of $U_{N-k-1}$, we get that $ (-\Delta )^{N-k-1}u - U_{N-k-1} $ is also bounded from above. Therefore, by the Liouville theorem, there exists a constant $C$ such that
\begin{equation*}
 (-\Delta)^{N-k-1} u - U_{N-k-1} = C,
\end{equation*}
everywhere in $\Rset^{2N-1}$. Meanwhile, since $(-\Delta)^{N-k}u = U_{N-k} \in L^q(\Rset^{2N-1})$ for any $q>q_k$, we can conclude that there exists some $q_{k+1} > q_k$ such that $U_{N-k-1} \in L^q(\Rset^{2N-1})$ for any $q>q_{k+1}$. Hence, there holds $C=0$, which completes the proof of the statement. 

Let $k=N-1$, it follows that $-\Delta u$ is non-negative. However, we can also check that
\[
\Delta\Big( \frac{1}{u} \Big) = - \frac{\Delta u}{u^2} + 2 \frac{|\nabla u|^2}{u^3} \geqslant 0. 
\]
It follows that $1/u$ must be constant, which contradicts with \eqref{eqFiniteEnergy}. The proof is complete.

\section*{Acknowledgments}

The author is deeply grateful to Nguyen Tien Tai for useful discussion on the non-existence part of the paper.


\begin{thebibliography}{wwwww}

\bibitem[CX09]{ChoiXu}
\textsc{Y.S. Choi, X. Xu}, 
Nonlinear biharmonic equations with negative exponents, preprint 1999,
\textit{J. Differential Equations} \textbf{246} (2009), pp. 216--234. 

\bibitem[CMM93]{CMM}
\textsc{P. Cl\'ement, R. Man\'asevich, E. Mitidieri},
Positive solutions for a quasilinear system via blow up,
\textit{Comm. Partial Differential Equations} \textbf{18} (1993), pp. 2071--2106. 

\bibitem[DN15]{DuocNgo2015}
\textsc{T.V. Duoc, Q.A. Ng\^o}, 
On radial solutions of $\Delta^2 u + u^{-q} = 0$ in $\Rset^3$ with exactly quadratic growth at infinity, arXiv:1511.09171, to appear in \textit{Differential and Integral Equations}.




\bibitem[FX13]{fx2013}
\textsc{X. Feng, X. Xu},
Entire solutions of an integral equation in $\Rset^{2N-1}$,
\textit{ISRN Mathematical Analysis}, Volume 2013 (2013), Article ID 384394, 17 pages.

\bibitem[Gue12]{G}
\textsc{I. Guerra},
A note on nonlinear biharmonic equations with negative exponents,
\textit{J. Differential Equations} \textbf{253} (2012), pp. 3147--3157.

\bibitem[GW08]{GuoWei}
\textsc{Z.M. Guo, J.C. Wei},
Entire solutions and global bifurcations for a biharmonic equation with singular non-linearity in $\Bbb R^3$,
\textit{Adv. Differential Equations} \textbf{13} (2008), pp. 753--780.

\bibitem[Juh13]{juhl}
\textsc{A. Juhl},
Explicit formulas for GJMS-operators and $Q$-curvatures,
\textit{Geom. Funct. Anal.} \textbf{23} (2013), pp. 1278--1370.

\bibitem[Gra07]{Gra07}
\textsc{C.R. Graham},
Conformal powers of the Laplacian via stereographic projection, 
\textit{SIGMA} \textbf{3} (2007), Art. 121.

\bibitem[GJMS92]{GJMS}
\textsc{C.R. Graham, R. Jenne, L. Mason, G. Sparling},
Conformally invariant powers of the Laplacian, I: existence, 
\textit{J. London Math. Soc.} \textbf{46} (1992), pp. 557--565.


\bibitem[KR03]{KR}
\textsc{P.J. McKenna, W. Reichel}, 
Radial solutions of singular nonlinear biharmonic equations and applications to conformal geometry, 
\textit{Electron. J. Differential Equations} \textbf{37} (2003), pp. 1--13.


\bibitem[Li04]{li2004}
\textsc{Y. Li},
Remark on some conformally invariant integral equations: the method of moving spheres,
\textit{J. Eur. Math. Soc. (JEMS)} \textbf{6} (2004), pp. 153--180.

\bibitem[LWZ16]{LWZ16}
\textsc{S. Luo, J.C. Wei, W. Zou}, 
On the triharmonic Lane--Emden equation, preprint, 2016

\bibitem[MW08]{MaWei}
\textsc{L. Ma, J.C. Wei	},
Properties of positive solutions to an elliptic equation with negative exponent,
\textit{ J. Funct. Anal.} \textbf{254} (2008), pp. 1058--1087.

\bibitem[Ngo16]{ngo2016}
\textsc{Q.A. Ng\^o}, On the sub poly-harmonic property for solutions of $-(\Delta)^pu<0$ in $\Rset^n$, preprint, 2016. \newline [Available online at: \texttt{ftp://file.viasm.org/Web/TienAnPham-16/Preprint\_1644.pdf}]

\bibitem[Xu05]{xu2005}
\textsc{X. Xu},
Exact solutions of nonlinear conformally invariant integral equations in $\mathbf R^3$,
\textit{Adv. Math.} \textbf{194} (2005), pp. 485--503.



\bibitem[XY02]{XuYang2002}
\textsc{X. Xu, P.C. Yang},
On a fourth order equation in 3-D,
\textit{ESAIM Control Optim. Calc. Var.} \textbf{8} (2002), pp. 1029--1042.

\end{thebibliography}
\end{document}